
\documentclass[preprint,12pt]{elsarticle}




\usepackage{amssymb}
\usepackage{amsthm}


\usepackage{algorithm}
\usepackage{algpseudocode}
\usepackage{natbib}
\usepackage{mathrsfs}
\usepackage{xcolor} 
\usepackage{booktabs}
\usepackage{rotating}
\usepackage{siunitx}
\sisetup{output-exponent-marker=\ensuremath{\mathrm{e}}, retain-explicit-plus}
\usepackage{caption, subcaption}

\usepackage{amsmath}
\usepackage{mathtools}
\mathtoolsset{showonlyrefs}
\usepackage[utf8]{inputenc}
\usepackage{enumitem}
\newlist{steps}{enumerate}{1}
\setlist[steps, 1]{label = \underline{Step \arabic*}.}

\newtheorem{theorem}{Theorem}
\newtheorem{remark}{Remark}

\newtheorem{lemma}{Lemma}

\newtheorem{assumption}{Assumption}
\newtheorem{corollary}{Corollary}

\def\R{\mathbb{R}}
\journal{Mathematics and Computers in Simulation}

\usepackage[colorlinks=true,linkcolor=blue,citecolor=blue,urlcolor=blue]{hyperref}
\usepackage{cancel}

\begin{document}
\sloppy

\begin{frontmatter}




\title{Convergence of the deep BSDE method for stochastic control problems formulated through the stochastic maximum principle}


\author[label1]{Zhipeng Huang}
\author[label2]{Balint Negyesi}
\author[label1]{Cornelis W. Oosterlee}
\affiliation[label1]
            {organization={Mathematical Institute, Utrecht University} }
            
\affiliation[label2]
            {organization={Delft Institute of Applied Mathematics (DIAM), Delft University of Technology} }

\begin{abstract}
It is well-known that decision-making problems from stochastic control can be formulated by means of a forward-backward stochastic differential equation (FBSDE). Recently, the authors of \cite{ji2022} proposed an efficient deep learning algorithm based on the stochastic maximum principle (SMP). In this paper, we provide a convergence result for this deep SMP-BSDE algorithm and compare its performance with other existing methods. In particular, by adopting a strategy as in \cite{hanlong2020}, we derive \emph{a-posteriori estimate}, and show that the total approximation error can be bounded by the value of the loss functional and the discretization error. We present numerical examples for high-dimensional stochastic control problems, both in case of drift- and diffusion control, which showcase superior performance compared to existing algorithms.

\end{abstract}



\begin{keyword}
stochastic control, deep SMP-BSDE, stochastic maximum principle, vector-valued FBSDE
\end{keyword}

\end{frontmatter}


\section{Introduction}\label{sec1}

Stochastic control theory is a powerful paradigm for modelling and analyzing decision-making problems that are subject to some random dynamics. Classical approaches for solving these kinds of problems include methods based on the dynamic programming principle (DP) \cite{bellman1958dynamic}, the stochastic maximum principle (SMP) \cite{bismut1978introductory} \cite{pontryagin2018mathematical} and  other techniques, see e.g. \cite{kushner1990numerical} \cite{krylov2004rate} \cite{dong2007rate} \cite{jakobsen2003rate}. However, these approaches cannot easily handle high-dimensional problems and suffer from the “curse of dimensionality". A recent candidate solution technique for stochastic control problems is formed by deep learning-based approaches, due to their remarkable performance in high-dimensional settings. The paper \cite{han2016} developed a deep learning algorithm that directly approximates the optimal control process by a neural network at each step in time, and by training a terminal loss functional for all time steps simultaneously. Similar approaches have been explored in the control theory community \cite{hunt1992neural}, \cite{lehalle1998piecewise} and \cite{psaltis1988}, before the rise of computing power and machine learning.

Inspired by the remarkable performance in \cite{han2016}, the research community has developed several neural network-based algorithms for stochastic control problems, see e.g. \cite{bachouch2022} \cite{hure2021deep} \cite{pereira2020feynman}. Many of these algorithms build upon deriving a forward-backward stochastic differential equation (FBSDE), associated with the control problem. Pioneered by the well-known deep BSDE method, initially proposed by \cite{han2018} and later extended by \cite{hanlong2020} to the coupled setting, several solution approaches have been proposed, showing outstanding empirical performance in high-dimensional frameworks. 
These methods mainly derive the FBSDE through a stochastic representation of the solution of the Hamilton-Jacobi-Bellman (HJB) equation, motivated by the non-linear Feyman-Kac formula. However, such techniques become infeasible when the diffusion of the state process is also controlled, as they do not solve for the value function's second derivative, which is necessary to compute the optimal (diffusion) control. 
As a remedy, enabling diffusion control, the authors in \cite{ji2022} proposed a deep BSDE algorithm where the associated FBSDE is derived from the stochastic maximum principle (SMP), which we call the deep SMP-BSDE. Other SMP-based algorithms include \cite{fouque2020deep} \cite{carmona2022convergence} in the context of mean-field control and mean-field games. We refer to \cite{hu2023recent} and \cite{germain2021neural} for a detailed overview of deep learning algorithms for stochastic control problems.

Furthermore, the authors in \cite{andersson2023} found that deep BSDE methods may fail to converge for FBSDEs stemming from stochastic control problems via DP, due to local minima. They proposed a robust counterpart by adding a regularization component to the loss function which resolved this issue in the case of drift control.
Despite the fact that there are many studies concerning stochastic control, only a few theoretical derivations are available regarding the convergence of machine learning-based approaches for FBSDEs stemming from stochastic control. Theoretical work in this direction includes the study of convergence of the deep BSDE method by \cite{hanlong2020}, which provides \emph{a-posteriori estimate},  and a non-Lipschitz counterpart by \cite{Jiang2021} that only allows the diffusion coefficient to be non-Lipschitz and independent of the BSDE.  The authors of \cite{andersson2023} prove the convergence of a robust deep BSDE method by exploiting the special structure of the FBSDE.  In more recent research, the authors in \cite{reisinger2023} proposed an efficient and reliable \emph{a-posteriori estimate} for fully coupled McKean-Vlasov FBSDEs, which naturally extended the error estimator for decoupled FBSDEs studied in \cite{bender2013}.  For other works in the decoupled framework, see e.g. \cite{hure2020deep}, \cite{negyesi2021one} and \cite{gao2023convergence}.

In this paper, we provide convergence results for the deep SMP-BSDE algorithm and compare the method with existing algorithms supported by numerical results. Unlike \cite{han2018} and \cite{andersson2023}, we consider FBSDEs that come from the SMP, instead of the HJB equation, and therefore the results and standard estimates for FBSDE stemming from DP can not be directly applied. Nevertheless, with some extra effort, we are able to adopt a similar strategy as in \cite{hanlong2020}, and derive the \emph{a-posteriori estimate} for the numerical solutions of the deep SMP-BSDE algorithm  in a multi-dimensional setting  . By this, we are able to tackle diffusion control problems.

This paper is organized as follows: In Section \ref{sec2}, we briefly review the theoretical foundations related to the SMP. In Section \ref{sec3}, we formulate the deep SMP-BSDE algorithm for diffusion control problems. We carry out a convergence analysis in Section \ref{sec4}, and, in particular, \emph{a-posteriori estimate} is derived for the deep SMP-BSDE algorithm. In Section \ref{sec5}, we demonstrate the performance of the algorithm through numerical examples both in the case of drift- and diffusion control.

\section{Background}\label{sec2}

In this section, we review some basic results from stochastic control theory and show how to reformulate a stochastic control problem into an FBSDE through the SMP. 

Let $\left(\Omega, \mathcal{F},\left\{\mathcal{F}_t\right\}_{0 \leq t \leq T}, \mathbb{P}\right)$ be a filtered probability space, supporting an $m$-dimensional Brownian motion $W$ and its natural filtration $\mathcal{F}=\left\{\mathcal{F}_t\right\}_{0 \leq t \leq T}$, augmented by all $\mathbb{P}$-null sets. Fixing $0<T<\infty$, we consider the following finite horizon stochastic control problem
\begin{equation}\label{stochastic_control_problem}
\left\{
\begin{aligned}
    & \inf_{u\in \mathcal{U}[0, T]} J\left( 0, x_0 ; u(\cdot) \right) := \inf_{u\in \mathcal{U}[0, T]} \mathbb{E} \left( \int_{0}^{T} \Bar{f}(s, X_s, u_s) \,ds + g(X_T) \right), \\
	& X_t = x_0 + \int_0^t \Bar{b}(s, X_s, u_s) \,ds + \int_0^t \Bar{\sigma}(s, X_s, u_s) \,dW_s, \quad t \in[0, T],
\end{aligned}
\right.
\end{equation}
where $\bar{b}: [0, T]\times \R^{d}\times \R^\ell\to \R^d$, $\bar{\sigma}: [0, T]\times \R^d\times \R^\ell\to \R^{d\times m}$, $\bar{f}: [0, T]\times \R^d\times\R^\ell\to \R$ and $g: \R^d\to\R$ are deterministic functions, and $X_t, u_t$ are $\R^d, \R^\ell$-valued stochastic processes, respectively. The set of admissible controls, $\mathcal{U}[0,T]$, is defined as
$$
\mathcal{U}[0, T] \coloneqq \left\{u:[0, T] \times \Omega \rightarrow U \mid u \in L_{\mathcal{F}}^2\left(0, T ; \mathbb{R}^\ell \right)\right\},
$$
with
$$
L_{\mathcal{F}}^2\left(0, T ; \mathbb{R}^\ell \right)  
\coloneqq  \left\{x:[0, T] \times \Omega \rightarrow \mathbb{R}^\ell 
\mid x \text{ is } \mathcal{F} \text {-adapted and } E \left[\int_0^T\left\|x_t\right\|^2 \,dt\right]<\infty\right\},
$$
where we shall denote $\| \cdot \|$ for both the usual Euclidean norm and the Frobenius norm for matrices. We assume that the control domain $U$ is a convex body in $\mathbb{R}^\ell$.

Any process $u_t \in \mathcal{U}[0, T]$ is called an admissible control of \eqref{stochastic_control_problem}, and the $(X_t, u_t)$ consisting of the corresponding state process is called an admissible pair. Furthermore, $(X_t, u_t)$ is an optimal pair whenever the infimum of \eqref{stochastic_control_problem} is achieved, and accordingly, we define the value function $V(\cdot, \cdot)$ of \eqref{stochastic_control_problem} as follows
\begin{equation}\label{def:value_function}
\left\{
\begin{aligned}
& V(t, x) \coloneqq \inf_{u \in \mathcal{U}[t, T]} J(t, x ; u(\cdot)),  \quad \forall(t, x) \in[0, T) \times \mathbb{R}^d,\\ 
& V(T, y) \coloneqq g(y), \quad \forall y \in \mathbb{R}^d,
\end{aligned}
\right.
\end{equation}
and denote its partial derivatives as $V_x\coloneqq \partial_x V(t, x)$, $V_{xx}\coloneqq \partial_{xx} V(t, x)$ and $\partial_t V=\partial_t V(t, x)$ without specifying their dependencies in $(t,x)$.

\begin{remark} We shall further distinguish two important classes of problem \eqref{stochastic_control_problem}. We call \eqref{stochastic_control_problem} a drift control problem if the drift coefficient depends on the control $u_t$ but not the diffusion coefficient, and it is called a diffusion control problem if the diffusion coefficient also includes $u_t$ as an argument.
\end{remark}

Associated with the stochastic control problem \eqref{stochastic_control_problem}, we introduce the adjoint equation, as follows
\begin{equation}\label{eq:adjoint}
\left\{
\begin{aligned}
    & P_t = P_0 - \int_0^t \nabla_x \Bar{H}(s, X_s, u_s, P_s, Q_s) \,ds + \int_0^t Q_s\, d W_s, \quad t \in[0, T] \\
    & P_T = - \nabla_x g(X_T),
\end{aligned}
\right.
\end{equation}
where the $\nabla_x  \Bar{H}$ is the derivative of Hamiltonian $ \Bar{H}$, which is defined by
\begin{equation}\label{def:Ham}
\begin{aligned}
& \Bar{H}(t, x, u, p, q) := p^\top \Bar{b}(t, x, u) + \operatorname{Tr}\left(q^{\top} \Bar{\sigma}(t, x, u)\right) - \Bar{f}(t, x, u),\\
&\quad \forall (t, x, u, p, q) \in[0, T] \times \mathbb{R}^d \times U \times \mathbb{R}^d \times \mathbb{R}^{d \times m}.
\end{aligned}
\end{equation}
Equation \eqref{eq:adjoint} is a BSDE whose solution is formed by a pair of processes, $(P(\cdot), Q(\cdot)) \in L_{\mathcal{F}}^2\left(0, T ; \mathbb{R}^d\right) \times \left(L_{\mathcal{F}}^2\left(0, T ; \mathbb{R}^d\right)\right)^m$.
Concerning the wellposedness of BSDE \eqref{eq:adjoint}, we state the following assumption first.

\begin{assumption}\label{assume_adjoint}\
Let $\bar{\varphi} = \bar{b}, \bar{\sigma}, \bar{f}$ and $g$. The map $\bar{\varphi}$ is $C^2$ in $x$, and $\bar{\varphi}(t, 0, u)$ is bounded for any $(t,u)\in [0,T] \times U$. Moreover, $\bar{\varphi}$, $\bar{\varphi}_x$ and $\bar{\varphi}_{xx}$ are uniformly Lipschitz in $x$ and $u$.
\end{assumption}

\begin{remark}\label{wellpose_adjoint} With Assumption \ref{assume_adjoint}, the adjoint equation, or BSDE \eqref{eq:adjoint}, admits a unique solution for every admissible pair $(X_t, u_t)$. However, there is only one of the admissible 4-tuples $(X_t, u_t, P_t, Q_t)$ that also minimizes the objective function in \eqref{stochastic_control_problem}. Therefore, to have an FBSDE that admits a unique solution without including the objective function of the control problem, we need extra effort in the reformulation. For this purpose, we recall the SMP below.
\end{remark}

\begin{theorem}[Stochastic maximum principle]\label{thm:SMP}
Let Assumption \ref{assume_adjoint} hold, and $\left( X_t^*, u_t^*, P_t^*, Q_t^* \right)$ be an admissible 4-tuple. Suppose that $g(\cdot)$ is convex, $\Bar{H}\left(t, \cdot, \cdot, P_t^*, Q_t^* \right)$ defined by (\ref{def:Ham}) is concave for all $t \in[0, T]$ $\mathbb{P}$ almost surely, and the maximum condition
\begin{equation}\label{maxcond}
\Bar{H} \left( t , X_t^*, u_t^*, P_t^*, Q_t^* \right) = \max_{u \in U} \Bar{H} \left( t , X_t^*, u, P_t^*, Q_t^* \right), \quad \text{a.e.} \ t \in[0, T], \quad \mathbb{P} \text{-a.s.}
\end{equation}
holds. Then, $\left( X_t^*, u_t^* \right)$ is an optimal pair of problem \eqref{stochastic_control_problem}.
\end{theorem}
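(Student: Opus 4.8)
The plan is to establish this sufficiency result by the classical convex variational argument. Fix an arbitrary admissible pair $(X_t, u_t)$ and abbreviate $\bar b_s = \bar b(s,X_s,u_s)$, $\bar b_s^* = \bar b(s,X_s^*,u_s^*)$ (and analogously for $\bar\sigma$ and $\bar f$), together with $\bar H_s^* = \bar H(s,X_s^*,u_s^*,P_s^*,Q_s^*)$. The goal is to show that the cost gap is nonnegative. Starting from
\begin{equation*}
J(0,x_0;u)-J(0,x_0;u^*) = \E\left[\int_0^T \left(\bar f_s - \bar f_s^*\right) ds + g(X_T)-g(X_T^*)\right],
\end{equation*}
I would first use convexity of $g$ together with the terminal condition in \eqref{eq:adjoint} to bound the terminal term from below:
\begin{equation*}
g(X_T)-g(X_T^*) \ge \left\langle \nabla_x g(X_T^*), X_T-X_T^*\right\rangle = -\left\langle P_T^*, X_T-X_T^*\right\rangle.
\end{equation*}

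The central step is to rewrite $\E[\langle P_T^*, X_T-X_T^*\rangle]$ through Itô's product rule applied to $s\mapsto \langle P_s^*, X_s-X_s^*\rangle$. Inserting the dynamics of $P^*$ from \eqref{eq:adjoint} and the state equations for $X$ and $X^*$, noting $X_0 = X_0^* = x_0$, and taking expectations so that the martingale parts vanish (by the $L^2_{\mathcal F}$-integrability of the admissible 4-tuple), produces
\begin{equation*}
\E\left[\left\langle P_T^*, X_T-X_T^*\right\rangle\right] = \E\left[\int_0^T \left(-\left\langle \nabla_x \bar H_s^*, X_s-X_s^*\right\rangle + (P_s^*)^\top(\bar b_s-\bar b_s^*) + \operatorname{Tr}\left((Q_s^*)^\top(\bar\sigma_s-\bar\sigma_s^*)\right)\right) ds\right].
\end{equation*}
By definition \eqref{def:Ham} of $\bar H$, the drift and diffusion terms collapse to $\bar H(s,X_s,u_s,P_s^*,Q_s^*) - \bar H_s^* + (\bar f_s - \bar f_s^*)$. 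Substituting this together with the convexity bound into the cost gap, the running costs $\bar f_s - \bar f_s^*$ cancel, and the claim reduces to proving the pointwise-in-$(s,\omega)$ inequality
\begin{equation*}
\bar H_s^* - \bar H(s,X_s,u_s,P_s^*,Q_s^*) + \left\langle \nabla_x \bar H_s^*, X_s-X_s^*\right\rangle \ge 0.
\end{equation*}

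To sign this integrand I would invoke the two remaining hypotheses. Concavity of $(x,u)\mapsto \bar H(s,x,u,P_s^*,Q_s^*)$ gives the supporting-hyperplane inequality $\bar H(s,X_s,u_s,P_s^*,Q_s^*) - \bar H_s^* \le \langle \nabla_x \bar H_s^*, X_s-X_s^*\rangle + \langle \nabla_u \bar H_s^*, u_s-u_s^*\rangle$, so the left-hand side above is bounded below by $-\langle \nabla_u \bar H_s^*, u_s-u_s^*\rangle$. The maximum condition \eqref{maxcond} states that $u_s^*$ maximizes $u\mapsto \bar H(s,X_s^*,u,P_s^*,Q_s^*)$ over the convex body $U$; the associated first-order optimality condition on a convex set yields $\langle \nabla_u \bar H_s^*, u-u_s^*\rangle \le 0$ for every $u\in U$, in particular for $u=u_s$, whence the integrand is nonnegative and $J(0,x_0;u)\ge J(0,x_0;u^*)$.

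I expect the main obstacle to lie less in any single estimate than in the bookkeeping of the Itô step, including the careful justification that the stochastic integrals are true martingales with vanishing expectation, which rests on the $L^2_{\mathcal F}$ regularity from Assumption \ref{assume_adjoint} and Remark \ref{wellpose_adjoint}. A secondary subtlety is the passage from the global maximum condition \eqref{maxcond} to its variational-inequality form: this uses differentiability of $\bar H$ in $u$ and, crucially, the convexity of $U$, without which the supporting-gradient step that signs the integrand would break down.
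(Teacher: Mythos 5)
Your proof is correct and is essentially the proof the paper relies on: the paper does not reproduce an argument for Theorem~\ref{thm:SMP} but defers to \cite[pp.~149--150]{pham2009continuous} and \cite[pp.~138--140]{yong1999stochastic}, where precisely your convex variational scheme appears --- convexity of $g$ with $P_T^*=-\nabla_x g(X_T^*)$, It\^o's product rule on $\langle P_s^*, X_s-X_s^*\rangle$ to trade the terminal term for the Hamiltonian increments, and concavity plus the maximum condition to sign the integrand. One minor refinement in the cited sources: rather than your final appeal to differentiability of $\bar H$ in $u$ (which Assumption~\ref{assume_adjoint} does not strictly grant, only Lipschitz continuity in $u$), they observe that joint concavity together with \eqref{maxcond} makes $\nabla_x \bar H_s^*$ a supergradient of $x\mapsto \sup_{u\in U}\bar H(s,x,u,P_s^*,Q_s^*)$ at $X_s^*$, which yields $\bar H(s,X_s,u_s,P_s^*,Q_s^*)-\bar H_s^*\leq \langle \nabla_x \bar H_s^*, X_s-X_s^*\rangle$ directly, bypassing the first-order variational inequality in $u$.
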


\begin{remark}
The proof of this theorem is found in \cite[pp. 149-150]{pham2009continuous}, or, in a more general setting, \cite[pp. 138-140]{yong1999stochastic}. Theorem \ref{thm:SMP} provides a sufficient condition for the optimal control $u^*_t$, when certain concavity and convexity conditions hold, which are crucial in general, see for instance, \cite[Example 3.1, pp. 138-140]{yong1999stochastic}. On the other hand, Theorem \ref{thm:SMP} itself does not constitute a necessary condition unless there is no diffusion control in problem \eqref{stochastic_control_problem}. In the rest of this paper, we shall use superscript $*$ to indicate that a process is associated with the optimal control $u_t^*$  whenever it needs to be further distinguished. 
\end{remark}

Under sufficient smoothness and concavity assumptions of $\bar{H}$ in Theorem \ref{thm:SMP}, the optimization \eqref{maxcond} is uniquely solved by the following first-order conditions,
\begin{align}\label{feedback:first-order-condition}
\nabla_u \bar{H}(t, X^*_t, u^*_t, P^*_t, Q^*_t) 
= &(\nabla_u \bar{b}(t, X^*_t, u^*_t))^\top P^*_t \\
& + \nabla_u \operatorname{Tr}(\bar{\sigma}^\top(t, X^*_t, u^*_t) Q^*_t) - \nabla_u\bar{f}(t, X^*_t, u^*_t)=0.  
\end{align}
 Under the setting of Algorithm 3 in \cite{ji2022}, without loss of generality  we assume that the first-order conditions yields an explicit formula for the mapping $\mathcal{M}: (t, X_t^*, P_t^*, Q_t^*)\mapsto u_t^*$,
\begin{equation}\label{feedback}
    u_t^* = \mathcal{M}(t, X^*_t, P^*_t, Q^*_t), \quad \forall t \in [0, T].
\end{equation}
We will call this function the \emph{feedback map}, and remark that for a rather wide range of interesting problems such an expression is available in closed-form.
 
Let us define the map $\varphi = b, \sigma, f$ and $g$, which is given by the composition $\varphi \coloneqq \bar{\varphi}(t, x, \mathcal{M}(t,x,p,q))$ for $\bar{\varphi} = \bar{b}, \bar{\sigma}$ and $\bar{f}$. 

Similarly, we define the function $\bar{F}(t,x,u,p,q) \coloneqq \nabla_x \bar{H}(t,x,u,p, q)$ and write
\begin{equation}\label{def:Ham_pq}
F(t, x, p, q) 
\coloneqq \bar{F}(t,x, \mathcal{M}(t,x,p,q), p, q).
\end{equation}
With this in hand, we reformulate \eqref{eq:adjoint} and the controlled SDE of \eqref{stochastic_control_problem} as a fully-coupled FBSDE, for $t\in[0, T]$,
\begin{equation}\label{eq:FBSDE}
\left\{
\begin{aligned}
X_t &= x_0 + \int_0^t b(s, X_s, P_s, Q_s) \,ds + \int_0^t \sigma(s, X_s, P_s, Q_s) \,dW_s, \\
P_t &= P_0 - \int_0^t F(s, X_s, P_s, Q_s) \,ds + \int_0^t Q_s\, d W_s, \\
P_T &= - \nabla_x g(X_T). 
\end{aligned}
\right.
\end{equation}
We call the BSDE  part in \eqref{eq:FBSDE}, subject to its boundary condition, the SMP-BSDE.

\begin{remark}\label{remark:4} An important feature of \eqref{eq:FBSDE} is that its solution is equivalent to the solution of \eqref{stochastic_control_problem}. Let $(X^*_t, u^*_t)$ be an optimal pair obtained by \eqref{stochastic_control_problem}, then there exists a unique pair $(P^*_t, Q^*_t)$ which follows from Remark \ref{wellpose_adjoint}. Therefore, we have the optimal $(X^*_t, P^*_t, Q^*_t)$ that also solves \eqref{eq:FBSDE}. On the other hand, solving \eqref{eq:FBSDE} gives us $(\tilde{X}_t, \tilde{P}_t, \tilde{Q}_t )$, and we obtain $\tilde{u}_t$ by the feedback map \eqref{feedback}. Consequently, we must have $\tilde{u}_t = u^*_t$ and $(\tilde{X}_t, \tilde{P}_t, \tilde{Q}_t ) = (X^*_t, P^*_t, Q^*_t)$, due to the uniqueness of feedback map \eqref{feedback} and Theorem \ref{thm:SMP}.
\end{remark}

\section{Existing algorithms and the deep SMP-BSDE algorithm}\label{sec3}

In this section, we first give a comparison between the existing deep BSDE algorithm and the deep SMP-BSDE algorithm, and show why the former can not be used to solve diffusion control problems. 

Suppose the DP holds, then the value function $V(t,x)$ of problem \eqref{stochastic_control_problem} solves the following HJB equation
\begin{equation}\label{eq:HJB}
\left\{
\begin{aligned} 
& - \partial_t V + \sup_{u \in U} \mathcal{G}\left(t, x, u, -V_x, -V_{x x}\right)=0,\\
& V(T, x) = g(x),
\end{aligned}
\right.
\end{equation}
where the generalized Hamiltonian $\mathcal{G}$ is defined by
\begin{equation}\label{generazlied_Hamiltonian}
\mathcal{G}\left(t, x, u, -V_x, -V_{xx}\right) \coloneqq  
- \frac{1}{2} \operatorname{Tr}\left(\bar{\sigma}(t, x, u)^{\top} V_{xx} \bar{\sigma}(t, x, u)\right) - V_x^\top \bar{b}(t, x, u) - \bar{f}(t, x, u)
\end{equation}
By the stochastic verification theorem, see \cite[pp. 268-269]{yong1999stochastic}, a given admissible pair $(X_t^*, u_t^*)$ is optimal if and only if 
\begin{equation}
\begin{aligned}
\mathcal{G} \left(t, X_t^*, u_t^*, -V_x, -V_{xx}\right)=\max_{u \in U} \mathcal{G} \left(t, X_t^*, u,-V_x,-V_{xx}\right),
\end{aligned}
\end{equation}
provided that $V$ is a classical solution to \eqref{eq:HJB} with sufficient smoothness.

Suppose \eqref{eq:HJB} admits a unique classical solution given by the value function $V(t,x)$. Then, it is straightforward to obtain a new feedback map $\tilde{\mathcal{M}}$ by maximizing $\mathcal{G}$ over $u$, which allows us to write the optimal control $u^*_t$ in terms of $X^*_t$, $V_x$ and $V_{xx}$
$$ u^*_t = \tilde{\mathcal{M}}(t, X^*_t, -V_x, -V_{xx} ) , \qquad  t\in [0, T].$$
Notice that whenever the diffusion includes the control variable, $\tilde{\mathcal{M}}$ will always depend on $V_{xx}$. From the stochastic representation of $V(t,x)$, we obtain for $t\in[0,T]$
\begin{equation}\label{valuefunction_BSDE}
V(t, x) = g(X^*_T) + \int_t^T \bar{f}(s, X^*_s, u^*_s) \,ds - \int_t^T V_x^\top \bar{\sigma}(t, X_t^*, u_t^*) \, d W_s .
\end{equation}
Equation \eqref{valuefunction_BSDE} defines a BSDE whose unique solution coincides with $(Y_t, Z_t) = \left(V(t, X_t^*),  V_x^\top\bar{\sigma}(t, X^*_t, u^*_t) \right)$, whenever the feedback map $\tilde{\mathcal{M}}$ is only a function of $V_x$ but not of $V_{xx}$. However, this is only the case when the diffusion coefficient does not depend on $u$, i.e., in the case of drift control. In particular, equation \eqref{valuefunction_BSDE} coincides with the deep BSDE method of \cite{han2018} and \cite{andersson2023}, applied to stochastic control problems. In numerical settings where one does not have direct access to $V_{xx}$, this makes the dynamic programming approaches described above infeasible, whenever the diffusion coefficient depends on $u$.

Moreover, the robust counterpart of the deep BSDE method in \cite{andersson2023}, which first simulates $X_t$ forward in time and computes the samples of $Y_0$, denoted by $\mathcal{Y}_0$, backwards in time, according to \eqref{valuefunction_BSDE}, introduces an objective functional of the form
\begin{equation}\label{robust_obj}
\inf_{\theta^Z} E\left( \mathcal{Y}_0 \right) + \lambda \operatorname{Var}\left( \mathcal{Y}_0 \right)
\end{equation}
for some parametric space $\theta^Z$ of the neural networks for approximating $Z_t$ and a chosen constant $\lambda>0$. Such formulation builds upon the facts that the corresponding DP approach results in a BSDE whose driver is independent of $Y_t=V(t, X_t^*)$, and $Y_0 = V(t_0, x_0)$ is a deterministic quantity coinciding with the value function of the control problem. Moreover, $\operatorname{Var}\left( \mathcal{Y}_0 \right)$ is shown to be equal to the terminal condition of the BSDE, see \cite[sec. 3]{andersson2023}, and therefore \eqref{robust_obj} should be minimized. However, this robustness technique does not apply to the SMP-BSDE setting, as in general $F$ in \eqref{def:Ham_pq} depends on $P_t$, and since $P_0 = -V_x(t_0, x_0)$ its mean is not necessarily minimized at $t_0$. From this point of view, the deep SMP-BSDE algorithm seems particularly promising for solving general stochastic control problems.

For the reasons above, in order to be able to treat diffusion control problems, in what follows we focus on the SMP-based FBSDE formulation. Let us state the discrete scheme of the deep SMP-BSDE algorithm originating from Algorithm 3 of \cite{ji2022}, but instead of using one single neural network for both $P_0$ and the process $Q_t$, as in the aforementioned paper, we approximate $P_0$ and $Q_t$ at each step in time by a separate neural network, respectively.
We consider the following classical Euler scheme corresponding to \eqref{eq:FBSDE}
\begin{subequations}
    \begin{align}
        \inf_{\mu_0^\pi \in \theta_0^P, \phi_i^\pi \in  \theta_i^Q}  &E \left\| - \nabla_x g\left(X_T^\pi\right) - P_T^\pi \right\|^2, \label{euler_objective}  \\
        &\text{s.t.}
\left\{
\begin{aligned}
&X_0^\pi= x_0, \\
&P_0^\pi= \mu_0^\pi(x_0), \\
&X_{t_{i+1}}^\pi = \begin{aligned}[t]
    X_{t_i}^\pi &+ b\left(t_i, X_{t_i}^\pi, P_{t_i}^\pi, Q_{t_i}^\pi \right) \Delta t\\
    &+ \sigma\left(t_i, X_{t_i}^\pi, P_{t_i}^\pi, Q_{t_i}^\pi \right) \Delta W_i,
\end{aligned} \\
& Q_{t_i}^\pi = \phi_i^\pi\left(X_{t_i}^\pi \right), \\
& P_{t_{i+1}}^\pi = P_{t_i}^\pi - F\left(t_i, X_{t_i}^\pi, P_{t_i}^\pi, Q_{t_i}^\pi\right) \Delta t + Q_{t_i}^\pi \Delta W_i,
\end{aligned}
\right.\label{eq:euler}
    \end{align}
\end{subequations}
with a time partition, $\pi: 0=t_0<t_1<\cdots<t_N=T$, $h=T / N$, 
$t_i=i h$ and $\Delta W_i:=W_{t_{i+1}}-W_{t_i}$ for $i=0,1, \ldots, N-1$. We recall the definitions of $b$, $\sigma$ and $F$ in Section \ref{sec2}. Moreover, we let $\theta_0^P$ and $\theta_i^Q$ be the corresponding parametric spaces for the neural networks $\theta_0^P\ni\mu_0^\pi: \R^d\to \R^d$ and $\theta_i^Q\ni\phi_i^\pi:\R^d\to\R^{d\times m}$, respectively. The objective function \eqref{euler_objective} serves as the loss function in the machine learning algorithm, and through the training of the neural networks we wish to find appropriate functions $\mu_0^\pi(x_0)$ and $\phi_i^\pi\left(X_{t_i}^\pi \right)$ that can approximate $P_0$ and $Q_{t_i}$ sufficiently well.
The complete pseudo-code for the deep SMP-BSDE algorithm is given in Algorithm \ref{algorithm}, which will be used in the later Section \ref{sec5} for numerical experiments.
\begin{algorithm}[H]
  \caption{deep SMP-BSDE algorithm}
  \begin{algorithmic}[1] 
    \State \textbf{Input:} Initial parameters $\left(\theta^P_0, \theta^Q_0,\ldots, \theta^Q_{N-1}\right)$, learning rate $\eta$; batch size $M$; number of iteration $K$.
    \State \textbf{Data:} Simulated Brownian increments $\left\{ \Delta W_{t_i, k} \right\}_{0\leq i\leq N-1, 1\leq k\leq K}$
    \State \textbf{Output:} The triple $(X_{t_i}, P_{t_i}, Q_{t_i})$
    \For{$k = 1$ to $K$}
        \State $X_{t_0, k}^{\pi} = x_0$, $P_{t_0, k}^{\pi} = \mu_0^\pi(x_0; \theta^P_{0} ) $ 
        \For{$i = 0$ to $N-1$}
        \State $ Q_{t_i, k}^{\pi} = \phi_i^\pi\left(X_{t_i}^\pi; \theta^Q_{i}\right) $ 
        \State $ u_{t_i, k}^{\pi} = \mathcal{M}\left(t_i, X_{t_i, k}^{\pi}, P_{t_i,k}^{\pi}, Q_{t_i,k}^{\pi} \right)$
        \State $ X_{t_{i+1},k}^{\pi} =  X_{t_i,k}^{\pi} + \bar{b} \left(t_i, X_{t_i,k}^{\pi},u_{t_i, k}^{\pi} \right) \Delta t_i  + \bar{\sigma} \left(t_i, X_{t_i,k}^{\pi}, u_{t_i, k}^{\pi} \right) \Delta W_{t_i, k} $
        \State $ P_{t_{i+1},k}^{\pi} = P_{t_i,k}^{\pi} -  \bar{F} \left(t_i, X_{t_i,k}^{\pi}, u_{t_i, k}^{\pi} \right) \Delta t_i + Q_{t_i,k}^{\pi} \Delta W_{t_i, k} $ 
        \EndFor
    \State $\text{Loss} = \frac{1}{M} \sum_{j=1}^M  \left\| -\nabla_x g\left( X_{t_N,k}^{\pi} \right) - P_{t_N,k}^{\pi}\right\|^2 $
    \State $  \left(\theta^P_0, \theta^Q_0,\ldots, \theta^Q_{N-1} \right)  \longleftarrow  \left(\theta^P_0, \theta^Q_0,\ldots, \theta^Q_{N-1} \right) - \eta \nabla \text{Loss} $
    \EndFor
  \end{algorithmic}
  \label{algorithm}
\end{algorithm}

\section{Convergence Analysis}\label{sec4}

This section is dedicated to the convergence analysis for the deep SMP-BSDE method, reviewed in Section \ref{sec2}, and the discrete scheme \eqref{eq:euler}. In particular, we show that the total error of the numerical solution to the FBSDE is bounded by the time discretization error and the simulation error of the objective function, and such error in theory could be made arbitrarily small in a sufficiently fine grid, due to the universal approximation theorem. Our analysis follows a similar strategy as \cite{hanlong2020}. For the sake of this section, in order to be able to use techniques established therein, we need the following restriction.
\begin{assumption}\label{assumption:drift_control}
    The coefficients $\bar{b}, \bar{\sigma}$ in \eqref{stochastic_control_problem} imply a feedback map such that the compositions $\bar{b}(t, x, \mathcal{M}(t, x, p, q))$ and $\bar{\sigma}(t, x, \mathcal{M}(t, x, p, q))$ do not depend on $q$.
\end{assumption}
Even though this condition may seem abstract, note that it in particular covers the important subclass of drift control problems, i.e. whenever $\bar{\sigma}$ does not depend on $u$. 
In that case, under the convexity conditions of Theorem \ref{thm:SMP}, the feedback map in \eqref{feedback} is only a function of $(t, X_t, P_t)$ and not of $Q_t$ -- see the first-order conditions in \eqref{feedback:first-order-condition}. Consequently, the solution pair of the backward equation in the FBSDE \eqref{eq:FBSDE} only couples into the forward component via $P_t$, similarly as in \cite{hanlong2020}. Nevertheless, in what follows we follow a more general presentation with Assumption \ref{assumption:drift_control}, such that special cases of diffusion control problems can also be treated.

Our main contribution is establishing a convergence result for the numerical solution of the stochastic control problem \eqref{stochastic_control_problem} by Algorithm \ref{algorithm}, where the corresponding FBSDE is vector-valued and derived from the SMP. In this regard, our results can be viewed as an extension to \cite{hanlong2020}, where the backward equation in \eqref{eq:FBSDE} was only scalar-valued. 

We acknowledge that a recent article \cite{reisinger2023} has studied the convergence for a more general problem formulation, i.e., a fully coupled McKean-Vlasov FBSDE (MV-FBSDE), where the maps $b$, $\sigma$ and $F$ may also depend on the $Q_t$ process and the law of the solution.  The main difference between these two works is the set of assumptions and the corresponding methods for studying the well-posedness of the discretization of the equation. To be specific, \cite{reisinger2023} have studied the well-posedness and stability of an implicit Euler discretization of the MV-FBSDE, and consequently adapted the method of continuation for studying this discretization, which requires a structural monotonicity assumption about the MV-FBSDE. On the contrary, our work utilizes the well-posedness and convergence of the implicit scheme from \cite{bender2008time}, which is developed through a fixed-point argument and uses a different set of weak coupling and monotonicity conditions formally stated in \cite{hanlong2020, bender2008time}. Indeed, by adopting the structural monotonicity assumptions that are also used in \cite{BENSOUSSAN2015, peng1999}, the authors of \cite{reisinger2023} study a more general error estimator compared to the one in \cite{hanlong2020}, and extends the work \cite{bender2013} to a coupled MV-FBSDE setting.  On the other hand, our work is an extension of \cite{hanlong2020} and a convergence study of the algorithm proposed by \cite{ji2022}, building on a different set of weak coupling and monotonicity conditions, and therefore these two works do not contain each other. 

To conduct our analysis, we require the following technical, standing assumptions to hold.

\begin{assumption}\label{assumption:holder} Suppose
\begin{enumerate}
    \item The maps $\bar{b}, \bar{\sigma}, \bar{F}$ are uniformly Hölder- $\frac{1}{2}$  continuous in $t$. 
    \item The feedback map $\mathcal{M}(t,x,p, q)$ is uniformly Hölder- $\frac{1}{2}$  continuous in $t$, and Lipschitz continuous in $x$, $p$ and $q$, respectively.
\end{enumerate}
\end{assumption}

\begin{assumption}\label{assumption:kb_kf}
There exist constants $k^b$ and $k^F$, that are possibly negative, such that
\begin{equation}
\begin{aligned}
\left( b(t, x_1, p) - b(t, x_2, p) \right)^\top \Delta x 
& \leq k^b \|\Delta x\|^2, \\
\left( F(t, x, p_1, q)-F(t, x, p_2, q) \right)^\top \Delta p 
& \leq k^F \|\Delta p\|^2.
\end{aligned}
\end{equation}
\end{assumption}

\begin{remark}\label{lipschitz_constants} With Assumptions \ref{assume_adjoint}, \ref{assumption:drift_control} and \ref{assumption:holder}, this implies that $b, \sigma, F$ and $g$ are uniformly Lipschitz continuous in all spatial variables, and therefore we may write
\begin{equation}
\begin{aligned}
\left\|b(t, x_1, p_1)-b(t, x_2, p_2)\right\|^2 
& \leq L^{b}_x \|\Delta x\|^2+ L^{b}_p \|\Delta p\|^2 ,  \\
\left\|\sigma(t, x_1, p_1) - \sigma(t, x_2, p_2)\right\|^2 
& \leq L^{\sigma}_x \|\Delta x\|^2 + L^{\sigma}_p \|\Delta p\|^2, \\
\left\| F\left(t, x_1, p_1, q_1\right) - F\left(t, x_2, p_2, q_2\right)\right\|^2 
& \leq L^{F}_x \|\Delta x\|^2+ L^{F}_p \|\Delta p\|^2 + L^{F}_q \|\Delta q\|^2, \\
\left\| \nabla_x g (x_1) - \nabla_x g(x_2)\right\|^2 & \leq  L^{g_{x}}_x \|\Delta x\|^2.
\end{aligned}
\end{equation}
Similarly, the Hölder-continuity of $b, F$ and $\sigma$ follow from the same set of assumptions, which imply that $b(t,0,0), F(t,0,0,0)$ and $\sigma(t,0,0)$ are bounded in $t$, and the boundedness of $\nabla_x g(x)$ directly follows from Assumption \ref{assume_adjoint}. For convenience, we use $\mathscr{L}$ to denote the set of all constants mentioned above and denote its upper bound by $L$.
\end{remark}

Next, we introduce the following system of quasi-linear parabolic PDEs, which is naturally associated with \eqref{eq:FBSDE}, 
\begin{equation}\label{pdeforsmp}
\left\{
\begin{aligned}
& \partial_t \nu^i  +\frac{1}{2} \operatorname{Tr} \left( \partial_{xx} \nu^i \sigma \sigma^{\top}(t, x, \nu) \right) + \partial_x \nu^i b\left(t, x, \nu, \partial_x \nu \sigma(t, x, \nu)\right) \\
& \qquad + F^i\left(t, x, \nu, \partial_x \nu \sigma(t, x, \nu)\right)=0, \quad \forall i=1, \cdots, d; \\
& \nu(T, x) = -\nabla_x g(x),
\end{aligned}
\right.
\end{equation}
where $\nu^i$ denotes the $i$-th component of the vector $\nu$.  Note that this is not the same as HJB equation \eqref{eq:HJB}, which is associated with a different FBSDE given by \eqref{valuefunction_BSDE}.

For this system of PDEs, we shall recall the so-called weak and monotonicity conditions studied in \cite{bender2008time, hanlong2020}, as well as in earlier literature \cite{antonelli1993backward, pardoux1999forward}. With these additional conditions, the system of PDEs \eqref{pdeforsmp} admits a unique viscosity solution $\nu$ and there is a unique solution $\left(X_t, P_t, Q_t\right)$ to the FBSDE \eqref{eq:FBSDE}, connected by $\nu\left(t, X_t\right) = P_t$. For a complete statement of these conditions and results, we refer to \cite{bender2008time, hanlong2020}.

\begin{remark}\label{PQ_representation} 
It is worth to mention that with additional stronger assumptions such as smoothness and boundedness conditions, the system of PDEs \eqref{pdeforsmp} admits a unique classical solution, and enjoys the representations $\nu(t, X_t) = P_t$ and $\partial_x \nu(t, X_t) \sigma(t, X_t, \nu(t, X_t) ) = Q_t $ via the nonlinear Feynman-Kac formulae. Moreover, a connection to the derivatives of value function $V(t, X_t^*)$ \eqref{def:value_function} evaluated at the optimal pair $(X_t^*, u_t^*)$ to the control problem can be established by
\begin{equation}\label{PQ_representation_drift}
P_t = - V_x(t, X_t^*), \quad Q_t = - V_{xx}(t, X_t^*) \bar{\sigma}(t, X_t^*, u_t^*),
\end{equation}
that follows directly from \cite[pp. 151-152]{pham2009continuous} or \cite[pp. 250-253]{zhang2017backward}, see also Remark \ref{remark:4}.  
\end{remark}

Although a classical solution enables us to solve control problems whenever both $V_{x}$ and $V_{xx}$ are of interest, e.g. delta-gamma hedging problems in finance, we stick with the viscosity solution setting which requires weaker conditions and is sufficient enough for us to derive the results in this paper. With the viscosity solution $\nu$, also called decoupling field in BSDE literature, one can decouple the FBSDE \eqref{eq:FBSDE} and obtain the following.

\begin{theorem}[Convergence of the implicit scheme]\label{convergence_im}
Suppose Assumptions \ref{assume_adjoint}-\ref{assumption:kb_kf} hold, and furthermore let the weak and monotonicity conditions in \cite{hanlong2020} hold. Then for a sufficiently small $h$, the following discrete-time equation for $0 \leq i \leq N-1$,
\begin{equation}\label{implicit_scheme}
\left\{
\begin{aligned}
&\bar{X}_0^\pi= x_0, \\
&\bar{X}_{t_{i+1}}^\pi=\bar{X}_{t_i}^\pi + b\left(t_i, \bar{X}_{t_i}^\pi, \bar{P}_{t_i}^\pi\right) h + \sigma\left(t_i, \bar{X}_{t_i}^\pi, \bar{P}_{t_i}^\pi\right) \Delta W_i, \\
&\bar{P}_T^\pi = - \nabla_x g\left(\bar{X}_T^\pi\right), \\
&\bar{Q}_{t_i}^\pi=\frac{1}{h} E\left(\bar{P}_{t_{i+1}}^\pi \Delta W_i^\top \mid \mathcal{F}_{t_i}\right), \\
&\bar{P}_{t_i}^\pi = E\left(\bar{P}_{t_{i+1}}^\pi + F\left(t_i, \bar{X}_{t_i}^\pi, \bar{P}_{t_i}^\pi, \bar{Q}_{t_i}^\pi\right) h \mid \mathcal{F}_{t_i}\right),
\end{aligned}
\right.
\end{equation}
has a solution, $\left(\bar{X}_{t_i}^\pi, \bar{P}_{t_i}^\pi, \bar{Q}_{t_i}^\pi\right)$, such that $\bar{X}_{t_i}^\pi \in L^2\left(\Omega, \mathcal{F}_{t_i}, \mathbb{P}\right)$ and
\begin{align}\label{estimate:implicit}
\begin{aligned}[b]
    \sup_{t \in[0, T]}\Big(E\left\|X_t-\bar{X}_t^\pi\right\|^2&+E\left\|P_t-\bar{P}_t^\pi\right\|^2\Big)\\
    &+\int_0^T E\left\|Q_t-\bar{Q}_t^\pi\right\|^2 \, d t
\end{aligned} \leq C\left(1+E\|x_0\|^2\right) h,
\end{align}
where $\bar{X}_t^\pi=\bar{X}_{t_i}^\pi, \bar{P}_t^\pi=\bar{P}_{t_i}^\pi, \bar{Q}_t^\pi=\bar{Q}_{t_i}^\pi$, for $t \in\left[t_i, t_{i+1}\right)$, $\left( X_t, P_t, Q_t \right)$ is the solution to \eqref{eq:FBSDE}, and $C$ is a constant depending on $\mathscr{L}$ and $T$.
\end{theorem}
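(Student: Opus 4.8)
The plan is to establish the statement in two stages: first the well-posedness of the implicit scheme \eqref{implicit_scheme}, then the order-$h$ strong error bound \eqref{estimate:implicit}. Throughout I would lean on the discretization machinery of \cite{bender2008time}, adapting it from the scalar backward component treated in \cite{hanlong2020} to the present $\R^d$-valued $\bar{P}^\pi$ and $\R^{d\times m}$-valued $\bar{Q}^\pi$. Under Assumption \ref{assumption:drift_control} the forward coefficients $b,\sigma$ do not see $q$, so the coupling enters only through $\bar{P}^\pi$, which keeps the structure close to that of \cite{bender2008time, hanlong2020}.

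For existence, the obstacle is that \eqref{implicit_scheme} is fully coupled: the forward recursion for $\bar{X}^\pi$ reads off $\bar{P}^\pi$, while the terminal condition $\bar{P}_T^\pi=-\nabla_x g(\bar{X}_T^\pi)$ feeds $\bar{X}_T^\pi$ back into the backward recursion. I would resolve this with a discrete decoupling field: construct functions $u_i:\R^d\to\R^d$ by backward induction, starting from $u_N(x)=-\nabla_x g(x)$, so that any solution satisfies $\bar{P}_{t_i}^\pi=u_i(\bar{X}_{t_i}^\pi)$. Each backward step requires solving the implicit relation $\bar{P}_{t_i}^\pi = E[\bar{P}_{t_{i+1}}^\pi\mid\mathcal{F}_{t_i}] + F(t_i,\bar{X}_{t_i}^\pi,\bar{P}_{t_i}^\pi,\bar{Q}_{t_i}^\pi)h$, in which only the $p$-argument of $F$ is implicit ($\bar{Q}_{t_i}^\pi$ being already determined by the previous line); since $F$ is Lipschitz-$L^F_p$ in $p$ by Remark \ref{lipschitz_constants}, the associated map is a contraction once $\sqrt{L^F_p}\,h<1$, yielding a unique $\bar{P}_{t_i}^\pi$. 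The quantitative heart of this step is to show each $u_i$ is Lipschitz with a constant that stays bounded as $i$ runs from $N$ down to $0$; this is exactly where the weak-coupling and monotonicity conditions of \cite{hanlong2020, bender2008time} enter, preventing the Lipschitz constants from blowing up along the recursion and simultaneously delivering $\bar{X}_{t_i}^\pi\in L^2(\Omega,\mathcal{F}_{t_i},\mathbb{P})$ together with uniform second-moment bounds on the triple.

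For the rate, I would introduce the error processes $\delta X_i = X_{t_i}-\bar{X}_{t_i}^\pi$, $\delta P_i = P_{t_i}-\bar{P}_{t_i}^\pi$, $\delta Q_i = Q_{t_i}-\bar{Q}_{t_i}^\pi$, and derive coupled one-step recursions by inserting the continuous solution of \eqref{eq:FBSDE} into \eqref{implicit_scheme} and tracking the local truncation residuals. Two ingredients drive the bound: first, the path regularity of the continuous FBSDE, namely $E\|X_t-X_s\|^2+E\|P_t-P_s\|^2\le C|t-s|$ together with the $L^2$-regularity of the control component, $\sum_i\int_{t_i}^{t_{i+1}}E\|Q_t-\tilde{Q}_{t_i}\|^2\,dt\le Ch$ with $\tilde{Q}_{t_i}=\tfrac1h E[\int_{t_i}^{t_{i+1}}Q_t\,dt\mid\mathcal{F}_{t_i}]$, which controls the residuals at size $O(h)$; and second, the Lipschitz and one-sided monotonicity estimates of Assumptions \ref{assumption:holder}--\ref{assumption:kb_kf}. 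Squaring the forward recursion, extracting $\delta Q_i$ from the backward step through the conditional expectation against $\Delta W_i^\top$ and the tower property, and grouping the cross terms with the inner-product bounds $k^b\|\delta X_i\|^2$ and $k^F\|\delta P_i\|^2$, I would absorb the dangerous contributions and close the estimate with a discrete Gronwall argument, arriving at \eqref{estimate:implicit} with $C$ depending only on $\mathscr{L}$ and $T$.

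I expect the main obstacle to be the passage to the vector-valued setting. In \cite{hanlong2020} the scalar $P$ permits a clean exploitation of the sign of the monotonicity term, whereas here the interaction between the $d$ components of $\delta P_i$ and the $d\times m$ entries of $\delta Q_i$ must be handled purely through the inner-product formulation of Assumption \ref{assumption:kb_kf} and careful Frobenius-norm bookkeeping for $\bar{Q}^\pi$. Concretely, after completing the squares the combined coefficient multiplying $\|\delta P_i\|^2$ and $\|\delta Q_i\|^2 h$ must remain controllable uniformly in $h$, which is precisely what the weak-coupling smallness condition buys; verifying that this survives in the vector case is the delicate point. A secondary technical step is confirming that the $L^2$-regularity of the matrix-valued $Q$ holds for this SMP-derived equation, which I would obtain from the decoupling-field representation $\partial_x\nu(t,X_t)\sigma(t,X_t,\nu(t,X_t))=Q_t$ under the regularity guaranteed by the stated conditions.
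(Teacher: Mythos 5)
Your overall route is the paper's route: the paper does not prove Theorem \ref{convergence_im} directly, but defers it to \cite{bender2008time} --- existence of $\left(\bar{X}_{t_i}^\pi, \bar{P}_{t_i}^\pi, \bar{Q}_{t_i}^\pi\right)$ via a fixed-point (Markovian iteration) argument on approximated decoupling fields (Theorem 5.1(ii) there), and the rate \eqref{estimate:implicit} from Theorem 6.5 there, which rests on comparing the discrete decoupling fields with the continuous one. Your two-stage plan (discrete decoupling field for existence; truncation error, $L^2$-regularity of $Q$, and discrete Gronwall for the rate) reconstructs essentially that machinery, and you correctly identify the uniform-in-$i$ Lipschitz bound on $u_i$ as the place where the weak coupling and monotonicity conditions enter.

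There is, however, a concrete error in your existence step. In the coupled scheme \eqref{implicit_scheme}, at time $t_i$, with $u_{i+1}$ already constructed, the unknown $p=\bar{P}_{t_i}^\pi$ must solve $p = E\left[u_{i+1}\left(\bar{X}_{t_{i+1}}^\pi\right) \mid \mathcal{F}_{t_i}\right] + F\left(t_i, x, p, q(p)\right) h$ with $\bar{X}_{t_{i+1}}^\pi = x + b\left(t_i, x, p\right) h + \sigma\left(t_i, x, p\right) \Delta W_i$ and $q(p) = \frac{1}{h} E\left[u_{i+1}\left(\bar{X}_{t_{i+1}}^\pi\right) \Delta W_i^\top \mid \mathcal{F}_{t_i}\right]$. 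Hence $\bar{Q}_{t_i}^\pi$ is \emph{not} ``already determined by the previous line'': both it and $E\left[\bar{P}_{t_{i+1}}^\pi \mid \mathcal{F}_{t_i}\right]$ depend on the unknown $p$ through the forward Euler step --- this is exactly the coupling. The fixed-point map is the composite one, and its Lipschitz constant is not $\sqrt{L^F_p}\,h$ but of order $L_{u_{i+1}}\left(\sqrt{L^b_p}\,h + \sqrt{m L^\sigma_p h}\right) + L^{1/2,F}\text{-terms}$, where one must note that the dependence of $q(p)$ on $p$ is $O(1)$ as $h\to 0$ (roughly $L_{u_{i+1}}\sqrt{m L^\sigma_p}$: the factor $\frac{1}{h}$ against $E\left[\,\cdot\, \Delta W_i^\top\right]$ absorbs the $\sqrt{h}$ gain), and is harmless only because it enters $F$ multiplied by $h$. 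The contraction therefore closes for small $h$ only once a Lipschitz bound on $u_{i+1}$, uniform along the backward recursion, is already in hand; the per-step contraction cannot be decoupled from the weak-coupling bookkeeping in the way your parenthetical suggests, which is precisely why \cite{bender2008time} treats the two jointly through the Picard iteration on the decoupling field. A secondary gap: you propose to obtain the $L^2$-regularity of $Q$ from the representation $Q_t = \partial_x \nu\left(t, X_t\right)\sigma\left(t, X_t, \nu\left(t, X_t\right)\right)$, but under the standing assumptions $\nu$ is only a viscosity solution (Remark \ref{PQ_representation} requires additional smoothness for that identity), so this regularity should instead be drawn from the Lipschitz decoupling field combined with a Zhang-type $L^2$-regularity result for the decoupled equation --- which is what Theorem 6.5 of \cite{bender2008time}, invoked by the paper, already packages.
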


\begin{remark} We briefly outline how to derive the above theorem using weak and monotonicity conditions and the results of \cite{bender2008time}. First, the existence of the solution $\left(\bar{X}_{t_i}^\pi, \bar{P}_{t_i}^\pi, \bar{Q}_{t_i}^\pi\right)$ is proved by the convergence of the approximated decoupling field using a fixed point argument, see Theorem 5.1 (ii) in \cite{bender2008time}. Then the error estimates \eqref{estimate:implicit} can obtained by Theorem 6.5 in the same literature, which essentially is built upon estimates for the approximated decoupling fields and the true counterpart. 
\end{remark}

\begin{remark} We further remark that we only need the well-posedness and the estimates \eqref{estimate:implicit} for the implicit scheme here, and there exist different sets of conditions such that similar results hold since the weak and monotonicity conditions are sufficient only. For instance, \cite{reisinger2023} derive a similar result for an implicit and forward Euler discretization of a coupled MV-FBSDE, using a different set of monotonicity assumptions and the method of continuation.
\end{remark}

Now, recall the classical Euler scheme \eqref{eq:euler}. For the discrete equation of $P_{t_{i+1}}^\pi$ in \eqref{eq:euler}, we take the conditional expectation, $E\left(\cdot \mid \mathcal{F}_{t_i}\right)$, at both sides and obtain the conditional expectation representation for $P_{t_i}^\pi$. For the same discrete equation of $P_{t_{i+1}}^\pi$, we multiply by $\left(\Delta W_i\right)^\top$ and take $E\left(\cdot \mid \mathcal{F}_{t_i}\right)$ afterwards. Therefore, we obtain a formulation, as follows
\begin{equation}\label{formulation3}
\left\{
\begin{aligned}
& X_0^\pi = x_0, \\
& X_{t_{i+1}}^\pi =X_{t_i}^\pi + b\left(t_i, X_{t_i}^\pi, P_{t_i}^\pi  \right) h+\sigma\left(t_i, X_{t_i}^\pi, P_{t_i}^\pi\right) \Delta W_i, \\
& Q_{t_i}^\pi = \frac{1}{h} E \left( P_{t_{i+1}}^\pi \Delta W_i^\top \mid \mathcal{F}_{t_i}\right), \\
& P_{t_i}^\pi = E\left( P_{t_{i+1}}^\pi + F\left(t_i, X_{t_i}^\pi, P_{t_i}^\pi,  Q_{t_i}^\pi\right) h \mid \mathcal{F}_{t_i}\right) .
\end{aligned}\right.
\end{equation}

\begin{remark} A key feature of this formulation is that it does not include the boundary condition for $P_T^\pi$, and therefore there are infinitely many solutions. In particular, it is easy to see that both the classic Euler scheme \eqref{eq:euler} and the implicit scheme \eqref{implicit_scheme} are solutions to this formulation. Such a feature is particularly suitable for the analysis of the algorithm, as we set our loss function \eqref{euler_objective} to measure the distance between $P_T^\pi$ and $-\nabla_x g(X_T^\pi)$, and one may expect that the closer the two solutions of \eqref{formulation3} are, the smaller loss we will have, and vice versa. 
\end{remark}

In what follows, we shall apply the same techniques in \cite{hanlong2020} to the current vector-valued BSDE setting in order to prove Lemma \ref{lem:estimate1} and Theorem \ref{thm:estimate2}. 

\begin{lemma}\label{lem:estimate1} 
For $j=1,2$, suppose $\big\{(X_{t_{i}}^{\pi, j}, P_{t_{i}}^{\pi, j}, Q_{t_{i}}^{\pi, j})\big\}_{0\leq i\leq N-1}$ are two solution triples of \eqref{formulation3}, with $X_{t_i}^{\pi, j}, P_{t_i}^{\pi, j} \in L^2\left(\Omega, \mathcal{F}_{t_i}, \mathbb{P}\right), 0 \leq i \leq N$. For any $\lambda_1>0, \lambda_2 \geq L^F_q$, and sufficiently small $h$, denote
\begin{equation}
\begin{aligned}
& A_1(h) \coloneqq 2 k^b+\lambda_1+L^\sigma_x+ L^b_x h, \\
& A_2(h) \coloneqq \left(\lambda_1^{-1}+h\right) L^b_p+L^\sigma_p, \\
& A_3(h) \coloneqq -\frac{\ln \left(1-\left(2 k^F+\lambda_2\right) h\right)}{h}, \\
& A_4(h) \coloneqq \frac{L^F_x}{\left(1-\left(2 k^F+\lambda_2\right) h\right) \lambda_2} .
\end{aligned}
\end{equation}

Then, we have, for $0 \leq n \leq N$,
\begin{align}
 E\left\| X_{t_n}^{\pi, 1}-X_{t_n}^{\pi, 2} \right\|^2 &\leq A_2 \sum_{i=0}^{n-1} e^{A_1(n-i-1) h}  E\left\| P_{t_i}^{\pi, 1}-P_{t_i}^{\pi, 2} \right\|^2 h, \label{eq:lemma2:x}\\
 E\left\| P_{t_n}^{\pi, 1}-P_{t_n}^{\pi, 2} \right\|^2 &\leq \begin{aligned}[t]
    &e^{A_3(N-n) h} E\left\| P_{t_N}^{\pi, 1}-P_{t_N}^{\pi, 2} \right\|^2\\
    &+ A_4 \sum_{i=n}^{N-1} e^{A_3(i-n) h} E\left\| X_{t_i}^{\pi, 1}-X_{t_i}^{\pi, 2} \right\|^2 h .\label{eq:lemma2:p}
\end{aligned}
\end{align}

\end{lemma}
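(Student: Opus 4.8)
The plan is to derive both inequalities from \textbf{one-step recursions}, obtained by squaring the increments of \eqref{formulation3} and invoking the monotonicity conditions of Assumption \ref{assumption:kb_kf}, and then to close each recursion with a discrete Gr\"onwall argument. Throughout I would write $\Delta X_{t_i} := X_{t_i}^{\pi,1} - X_{t_i}^{\pi,2}$, and analogously $\Delta P_{t_i}$, $\Delta Q_{t_i}$, denoting by $\Delta b$, $\Delta\sigma$, $\Delta F$ the corresponding differences of the coefficients evaluated along the two triples.

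For the forward estimate \eqref{eq:lemma2:x}, I would subtract the two forward updates, square, and take $E(\,\cdot\mid\mathcal{F}_{t_i})$. Since $\Delta W_i$ is independent of $\mathcal{F}_{t_i}$ with mean zero and covariance $hI_m$, the cross term linear in $\Delta W_i$ vanishes and $E(\|\Delta\sigma\,\Delta W_i\|^2\mid\mathcal{F}_{t_i}) = \|\Delta\sigma\|^2 h$, leaving
$$E\left(\|\Delta X_{t_{i+1}}\|^2\mid\mathcal{F}_{t_i}\right) = \|\Delta X_{t_i} + \Delta b\, h\|^2 + \|\Delta\sigma\|^2 h.$$
Expanding the first square produces the inner product $\Delta X_{t_i}^\top\Delta b$, which I would split into a pure $x$-difference, bounded by $k^b\|\Delta X_{t_i}\|^2$ via the monotonicity condition, and a pure $p$-difference, handled by Young's inequality with weight $\lambda_1$ together with the Lipschitz bound $L^b_p$ of Remark \ref{lipschitz_constants}. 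Collecting the $\|\Delta X_{t_i}\|^2$ coefficients (including the $O(h^2)$ contribution of $h^2\|\Delta b\|^2$ and the $L^\sigma_x$ piece of $\|\Delta\sigma\|^2$) gives exactly $1 + A_1(h)h$, while the $\|\Delta P_{t_i}\|^2$ coefficients give $A_2(h)h$. Taking full expectations yields $E\|\Delta X_{t_{i+1}}\|^2 \le (1+A_1 h)E\|\Delta X_{t_i}\|^2 + A_2 h\, E\|\Delta P_{t_i}\|^2$; since $\Delta X_0 = 0$, unrolling and bounding $(1+A_1 h)^{n-1-i} \le e^{A_1(n-i-1)h}$ delivers \eqref{eq:lemma2:x}.

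For the backward estimate \eqref{eq:lemma2:p} I would work with the martingale increment $\Delta M_i := \Delta P_{t_{i+1}} - E(\Delta P_{t_{i+1}}\mid\mathcal{F}_{t_i})$. The last line of \eqref{formulation3} gives $E(\Delta P_{t_{i+1}}\mid\mathcal{F}_{t_i}) = \Delta P_{t_i} - h\Delta F$, so the orthogonal (Pythagorean) decomposition reads
$$E\|\Delta P_{t_{i+1}}\|^2 = E\|\Delta M_i\|^2 + E\|\Delta P_{t_i}\|^2 - 2h\,E\left(\Delta P_{t_i}^\top\Delta F\right) + h^2 E\|\Delta F\|^2.$$
Rearranging and discarding the nonnegative $h^2$-term expresses $E\|\Delta P_{t_i}\|^2$ through $E\|\Delta P_{t_{i+1}}\|^2$, $-E\|\Delta M_i\|^2$, and $2h\,E(\Delta P_{t_i}^\top\Delta F)$. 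Bounding $\Delta P_{t_i}^\top\Delta F$ by splitting off the monotone $p$-difference ($\le k^F\|\Delta P_{t_i}\|^2$) and treating the remaining $x$- and $q$-differences by Young's inequality with weight $\lambda_2$ and the constants $L^F_x, L^F_q$ introduces a spurious term $\tfrac{L^F_q}{\lambda_2}E\|\Delta Q_{t_i}\|^2$.

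Controlling this $\Delta Q_{t_i}$ contribution, which has no analogue in the forward estimate and is where the vector-valued setting must be handled with care, is the step I expect to be the \textbf{main obstacle}. The resolution is the discrete martingale-representation bound $E(\|\Delta M_i\|^2\mid\mathcal{F}_{t_i}) \ge h\|\Delta Q_{t_i}\|^2$, which I would establish by expanding $E(\|\Delta M_i - \Delta Q_{t_i}\Delta W_i\|^2\mid\mathcal{F}_{t_i}) \ge 0$, using $\Delta Q_{t_i}h = E(\Delta M_i\Delta W_i^\top\mid\mathcal{F}_{t_i})$ from the third line of \eqref{formulation3} and $E(\Delta W_i\Delta W_i^\top) = hI_m$. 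Substituting $-E\|\Delta M_i\|^2 \le -hE\|\Delta Q_{t_i}\|^2$ renders the net $\Delta Q$-coefficient equal to $h\big(L^F_q/\lambda_2 - 1\big) \le 0$ \emph{precisely because} $\lambda_2 \ge L^F_q$, so it may be dropped. What remains is $\big(1 - (2k^F+\lambda_2)h\big)E\|\Delta P_{t_i}\|^2 \le E\|\Delta P_{t_{i+1}}\|^2 + \tfrac{L^F_x}{\lambda_2}h\,E\|\Delta X_{t_i}\|^2$; dividing by the positive factor $1-(2k^F+\lambda_2)h$ (valid for small $h$) and recognizing $\big(1-(2k^F+\lambda_2)h\big)^{-1} = e^{A_3 h}$ together with the prefactor $A_4$, a backward unrolling of this recursion from index $n$ to $N$ yields \eqref{eq:lemma2:p}.
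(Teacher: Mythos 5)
Your proof is correct, and it reaches exactly the intermediate inequality \eqref{deltaP_2} that the paper's proof hinges on, with all constants matching ($1+A_1h$ and $A_2h$ in the forward recursion, the exact identities $e^{A_3 h} = \left(1-(2k^F+\lambda_2)h\right)^{-1}$ and $A_4 = e^{A_3 h}L^F_x/\lambda_2$ in the backward unrolling). The forward estimate and the overall splitting-plus-Gr\"onwall scaffolding coincide with the paper's argument, but your handling of the one step you correctly single out as the main obstacle --- controlling the spurious $\tfrac{L^F_q}{\lambda_2}E\|\Delta Q_{t_i}\|^2$ term --- takes a genuinely different route. The paper invokes the martingale representation theorem to produce a continuous-time process $\{\delta Q_t\}$ as in \eqref{martingale_P2}, applies the It\^o isometry to get $E\|\delta P_{i+1}\|^2 = E\|\delta P_i - \delta F_i h\|^2 + \int_{t_i}^{t_{i+1}} E\|\delta Q_t\|^2\,dt$, and then bounds $hE\|\delta Q_i\|^2$ by this integral through $\delta Q_i = \tfrac{1}{h}E\big(\int_{t_i}^{t_{i+1}}\delta Q_t\,dt \mid \mathcal{F}_{t_i}\big)$ combined with an entrywise Jensen/Cauchy--Schwarz argument --- precisely the step the authors flag in \eqref{eq:lemma2:integral_term} as their matrix-valued extension of \cite{hanlong2020}. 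You instead stay entirely at the discrete level: the Pythagorean decomposition through $\Delta M_i$, plus expanding $E\big(\|\Delta M_i - \Delta Q_{t_i}\Delta W_i\|^2 \mid \mathcal{F}_{t_i}\big)\ge 0$ using $h\Delta Q_{t_i} = E\big(\Delta M_i \Delta W_i^\top \mid \mathcal{F}_{t_i}\big)$ and $E\big(\Delta W_i\Delta W_i^\top\big)=hI_m$, yields $E\big(\|\Delta M_i\|^2\mid\mathcal{F}_{t_i}\big)\ge h\|\Delta Q_{t_i}\|^2$ directly (I checked the expansion: the cross term contributes $-2h\|\Delta Q_{t_i}\|^2$ and the quadratic term $+h\|\Delta Q_{t_i}\|^2$, both in the Frobenius norm, so the bound is exact). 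Your route is more elementary: it avoids the martingale representation theorem and all continuous-time machinery, it is dimension-agnostic by construction --- the vector/matrix-valued case requires no separate treatment, since it is a pure conditional $L^2$-projection argument identifying $\Delta Q_{t_i}\Delta W_i$ as the best linear-in-$\Delta W_i$ approximation of $\Delta M_i$ --- and it only uses $\Delta P_{t_{i+1}}\in L^2$. What the paper's route buys is the continuous-time object $\int_{t_i}^{t_{i+1}}E\|\delta Q_t\|^2\,dt$, which keeps the proof formally parallel to the continuous BSDE a-priori estimates, but this is not needed for the statement of the lemma; for \eqref{eq:lemma2:x} and \eqref{eq:lemma2:p} the two arguments are functionally equivalent, and yours is arguably the cleaner one.
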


\begin{proof}
We remark that even though $A_1, A_2, A_3, A_4$ are all functions of $h$, in order to ease the presentation, we do not make this dependence explicit.
Let us define
\begin{align}
\begin{aligned}[t]
\delta X_i &\coloneqq X_{t_i}^{\pi, 1}-X_{t_i}^{\pi, 2},\\
\delta P_i & \coloneqq P_{t_i}^{\pi, 1}-P_{t_i}^{\pi, 2},\\
\delta b_i & \coloneqq b\left(t_i, X_{t_i}^{\pi, 1}, P_{t_i}^{\pi, 1}\right)-b\left(t_i, X_{t_i}^{\pi, 2}, P_{t_i}^{\pi, 2}\right), \\
\delta \sigma_i & \coloneqq \sigma\left(t_i, X_{t_i}^{\pi, 1}, P_{t_i}^{\pi, 1}\right)-\sigma\left(t_i, X_{t_i}^{\pi, 2}, P_{t_i}^{\pi, 2}\right), \\
\delta F_i &\coloneqq F\left(t_i, X_{t_i}^{\pi, 1}, P_{t_i}^{\pi, 1}, Q_{t_i}^{\pi, 1}\right)-F\left(t_i, X_{t_i}^{\pi, 2}, P_{t_i}^{\pi, 2}, Q_{t_i}^{\pi, 2}\right).
\end{aligned}
\end{align}
Then, we have
\begin{equation}\label{deltaX}
\delta X_{i+1}=\delta X_i+\delta b_i h+\delta \sigma_i \Delta W_i,
\end{equation}
\begin{equation}\label{deltaP}
\delta P_i=E\left( \delta P_{i+1}+\delta F_i h \mid \mathcal{F}_{t_i}\right),
\end{equation}
and motivated by \eqref{formulation3}, we define
\begin{equation}\label{deltaQ}
\delta Q_i\coloneqq \frac{1}{h} E\left(\delta P_{i+1} \Delta W_i^\top \mid \mathcal{F}_{t_i}\right).
\end{equation}
The martingale representation theorem implies the existence of an $\mathcal{F}_t$-adapted square-integrable process $\left\{\delta Q_t\right\}_{t_i \leq t \leq t_{i+1}}$, such that
\begin{equation}\label{martingale_P1}
\delta P_{i+1} = E\left( \delta P_{i+1} \mid \mathcal{F}_{t_i}\right)  +  \int_{t_i}^{t_{i+1}} \delta Q_t \,d W_t,
\end{equation}
which, together with \eqref{deltaP}, gives us
\begin{equation}\label{martingale_P2}
\delta P_{i+1} = \delta P_i-\delta F_i h  +  \int_{t_i}^{t_{i+1}} \delta Q_t \,d W_t.
\end{equation}
From \eqref{deltaX} and \eqref{martingale_P2}, noting that $\delta X_i, \delta P_i, \delta b_i, \delta \sigma_i$, and $\delta F_i$ are all $\mathcal{F}_{t_i}$ measurable, and $E\left[\Delta W_i \mid \mathcal{F}_{t_i}\right]=0$, $E\left(\int_{t_i}^{t_{i+1}} Q_t \,d W_t \mid \mathcal{F}_{t_i}\right)=0$, we have
\begin{align}  
E \left\| \delta X_{i+1} \right\|^2
& = E \left\| \delta X_i+\delta b_i h\right\|^2  + h E\left\|\delta \sigma_i\right\|^2,\\
E\left\| \delta P_{i+1} \right\|^2
&= E\left\| \delta P_i-\delta F_i h\right\|^2 + \int_{t_i}^{t_{i+1}} E\left\| \delta Q_t\right\|^2 \, dt.
\end{align}
We first establish the proclaimed upper bound for the forward diffusion part in \eqref{eq:lemma2:x}.
Using Assumption \ref{assumption:kb_kf} and Remark \ref{lipschitz_constants}, we can apply the root-mean-square and geometric mean inequality (RMS-GM) and derive, for any $\lambda_1>0$,
\begin{equation}
E \left\| \delta X_{i+1} \right\|^2\begin{aligned}[t]
&=\begin{aligned}[t]
    &\left(1+\left(2 k^b+\lambda_1+L^\sigma_x+ L^b_x h\right) h\right) E\left\|\delta X_i\right\|^2\\ & + \left(\left(\lambda_1^{-1}+h\right) L^b_p+L^\sigma_p\right) E\left\|\delta P_i\right\|^2 h, 
\end{aligned}  
\end{aligned}
\end{equation}
where we recall the definitions of $A_1, A_2$.
Notice that $E\left\|\delta X_0\right\|^2=0$, and, thus, by induction, we have that, for $1 \leq n \leq N$,
\begin{equation}
\begin{aligned}
E\left\|\delta X_n\right\|^2 
& \leq A_2 \sum_{i=0}^{n-1} e^{A_1(n-i-1) h} E\left\|\delta P_i\right\|^2 h,
\end{aligned}
\end{equation}
where  in above derivation  we have used the inequality $(1+x)\leq e^x$, $\forall x\in\R$.

In order to show \eqref{eq:lemma2:p}, we use a similar approach and apply the RMS-GM inequality for any $\lambda_2>0$, which yields
\begin{align}\label{lemma2:delta_p_i}
     E\left\|\delta P_{i+1}\right\|^2 \begin{aligned}[t]
&\geq  \begin{aligned}[t]
    &E\left\|\delta P_i\right\|^2+\int_{t_i}^{t_{i+1}} E\left\|\delta Q_t\right\|^2 \, dt  -  2 k^F h E\left\|\delta P_i\right\|^2 \\
& - \left( \lambda_2  E\left\|\delta P_i\right\|^2 + \lambda_2^{-1} \left(L^F_x E\left\|\delta X_i\right\|^2 + L^F_q E\left\|\delta Q_i\right\|^2\right)\right) h.
\end{aligned}
\end{aligned}
\end{align}
To deal with the integral term in the last inequality, we derive the following relation via Ito's isometry, \eqref{martingale_P2} and \eqref{deltaQ},
\begin{equation}
\delta Q_i = \frac{1}{h} E\left( \int_{t_i}^{t_{i+1}} \delta Q_t \, dt \mid \mathcal{F}_{t_i}\right).
\end{equation}
Thereafter, the Jensen and the Cauchy-Schwartz inequalities imply the following lower bound for the integral term, which extends the estimate in \cite{hanlong2020} to a matrix-valued setting,
\begin{equation}\label{eq:lemma2:integral_term}
\begin{aligned}
E\left\|\delta Q_i\right\|^2 h 
& = \sum_{j=1}^d \sum_{k=1}^m  \frac{1}{h} E\left( E\left(\int_{t_i}^{t_{i+1}} \left(\delta Q_t\right)_{j,k}  \, d t \mid \mathcal{F}_{t_i}\right) \right)^2 \\
&\leq \int_{t_{i}}^{t_{i+1}} E\left\| \delta Q_t \right\|^2 \, dt,
\end{aligned}
\end{equation}
where $(\cdot)_{j,k}$ denotes the $(j,k)$-entry of the matrix. 
Combining \eqref{lemma2:delta_p_i} with \eqref{eq:lemma2:integral_term}, subsequently gives
\begin{equation}\label{deltaP_2}
E\left\|\delta P_{i+1}\right\|^2 \geq \begin{aligned}[t]
    &\left(1-\left(2 k^F +\lambda_2\right) h\right) E\left\|\delta P_i\right\|^2\\
    &+\left(1-L^F_q \lambda_2^{-1}\right) E\left\|\delta Q_i\right\|^2 h - L^F_x \lambda_2^{-1} E\left\|\delta X_i\right\|^2 h .
\end{aligned}
\end{equation}
For any $\lambda_2 \geq L^F_q$ and sufficiently small $h$ satisfying $\left(2 k^F+\lambda_2\right) h<1$, we get the following inequality
\begin{equation}
E\left\|\delta P_i\right\|^2 \leq \left(1-\left(2 k^F+\lambda_2\right) h\right)^{-1} \left(E\left\|\delta P_{i+1}\right\|^2 + L^F_x \lambda_2^{-1} E\left\|\delta X_i\right\|^2 h\right) .
\end{equation}
Finally, recalling the definitions of $A_3, A_4$, by induction, we obtain that, for $0 \leq n \leq N-1$,
\begin{equation}
E\left\|\delta P_n\right\|^2 \leq e^{A_3(N-n) h} E\left\|\delta P_N\right\|^2 + A_4 \sum_{i=n}^{N-1} e^{A_3(i-n) h} E\left\|\delta X_i\right\|^2 h.
\end{equation}
\end{proof}
With the help of the a-priori result above, we can now state the main result of this section, which establishes the \emph{a-posteriori estimate} for the convergence of the deep BSDE algorithm for the SMP formulation of the stochastic control problem under the aforementioned assumptions.
\begin{theorem}\label{thm:estimate2}
 Suppose the conditions for Theorem \ref{convergence_im} hold true , and there exist $\lambda_1>$ $0, \lambda_2 \geq L^F_q, \lambda_3>0$ and constants in $\mathscr{L}$ such that $\overline{A_0}<1$, where
\begin{align}
\overline{A_1}&:=\lim_{h\to 0} A_1(h)=2 k^b+\lambda_1+L^\sigma_x, \\
\overline{A_2}&:=\lim_{h\to 0} A_2(h)=L^b_p \lambda_1^{-1}+L^\sigma_p , \\
\overline{A_3}&:=\lim_{h\to 0} A_3(h)=2 k^F+\lambda_2, \\
\overline{A_4}&:=\lim_{h\to 0} A_4(h)= L^F_x \lambda_2^{-1} , \\
\overline{A_0}&:=\overline{A_2} \frac{1-e^{-\left(\overline{A_1}+\overline{A_3}\right) T}}{\overline{A_1}+\overline{A_3}}\left\{L^{g_{x}}_x e^{\left(\overline{A_1}+\overline{A_3}\right) T}+\overline{A_4} \frac{e^{\left(\overline{A_1}+\overline{A_3}\right) T}-1}{\overline{A_1}+\overline{A_3}}\right\}.\label{eq:thm4:A0bar}
\end{align}
Then, there exists a constant $C>0$, depending on $E\|x_0\|^2, \mathscr{L}, T, \lambda_1$, $\lambda_2$ and $\lambda_3$ such that, for sufficiently small $h$,
\begin{equation}\label{eq:thm4:bound}
\begin{aligned}[b]
    \sup_{t \in[0, T]} 
\Big(E\left\|X_t-\hat{X}_t^\pi\right\|^2 
+ E\left\|P_t-\hat{P}_t^\pi\right\|^2 \Big)\\
+ \int_0^T E\left\|Q_t-\hat{Q}_t^\pi\right\|^2 \, dt 
\end{aligned}
\leq C \big( h + E\left\|-\nabla_x g\left(X_T^\pi\right)-P_T^\pi\right\|^2 \big),
\end{equation}
where $\hat{X}_t^\pi=X_{t_i}^\pi, \hat{P}_t^\pi=P_{t_i}^\pi, \hat{Q}_t^\pi=Q_{t_i}^\pi$ for $t \in\left[t_i, t_{i+1}\right)$, and $\left( X_t, P_t, Q_t \right)$ is the solution to \eqref{eq:FBSDE}.
\end{theorem}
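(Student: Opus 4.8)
The plan is to exploit the observation made right after \eqref{formulation3}: both the explicit Euler scheme \eqref{eq:euler} and the implicit scheme \eqref{implicit_scheme} solve the boundary-free formulation \eqref{formulation3}, differing only in their terminal behaviour. Writing $(X_{t_i}^\pi, P_{t_i}^\pi, Q_{t_i}^\pi)$ for the explicit solution (whose piecewise-constant interpolation is $(\hat X_t^\pi,\hat P_t^\pi,\hat Q_t^\pi)$) and $(\bar X_{t_i}^\pi, \bar P_{t_i}^\pi, \bar Q_{t_i}^\pi)$ for the implicit one, I would insert the implicit scheme as an intermediary and use the triangle inequality, e.g. $E\|X_t-\hat X_t^\pi\|^2 \le 2E\|X_t-\bar X_t^\pi\|^2 + 2E\|\bar X_t^\pi-\hat X_t^\pi\|^2$, and analogously for $P$ and $Q$. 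The first summand is of order $h$ by the convergence estimate \eqref{estimate:implicit} of Theorem \ref{convergence_im}. The whole problem therefore reduces to bounding the gap between the two discrete schemes, which is exactly the situation covered by Lemma \ref{lem:estimate1}: set $j=1$ to be the explicit and $j=2$ the implicit scheme, so that $\delta X_i = X_{t_i}^\pi-\bar X_{t_i}^\pi$ and $\delta P_i = P_{t_i}^\pi-\bar P_{t_i}^\pi$ obey \eqref{eq:lemma2:x} and \eqref{eq:lemma2:p}.

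The decisive structural input is that the implicit scheme carries the \emph{correct} terminal condition $\bar P_T^\pi=-\nabla_x g(\bar X_T^\pi)$, while the explicit one does not. Hence the terminal mismatch reads $E\|\delta P_N\|^2 = E\|P_T^\pi+\nabla_x g(\bar X_T^\pi)\|^2$, and I would split it around $-\nabla_x g(X_T^\pi)$: by a Young/RMS--GM inequality with parameter $\lambda_3>0$ together with the Lipschitz bound $L^{g_x}_x$ for $\nabla_x g$ from Remark \ref{lipschitz_constants}, one gets $E\|\delta P_N\|^2 \le (1+\lambda_3^{-1})\,E\|-\nabla_x g(X_T^\pi)-P_T^\pi\|^2 + (1+\lambda_3)L^{g_x}_x\,E\|\delta X_N\|^2$. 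The first term is precisely the loss functional \eqref{euler_objective}, with a ($\lambda_3$-dependent) constant that will be absorbed into $C$; the second couples the terminal $P$-error back into the forward $X$-error, which is what must still be controlled. Since the hypothesis $\overline{A_0}<1$ is stated with the bare constant $L^{g_x}_x$, I would fix $\lambda_3$ small enough that, by continuity, the perturbed coefficient $(1+\lambda_3)L^{g_x}_x$ still keeps the limiting self-coupling constant below $1$.

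The heart of the argument, and the step I expect to be the main obstacle, is closing this coupling. Substituting the forward bound \eqref{eq:lemma2:x} for $E\|\delta X_i\|^2$ (and for $E\|\delta X_N\|^2$) into the backward bound \eqref{eq:lemma2:p}, after inserting the terminal estimate above, produces a discrete self-referential inequality of the form $\max_{0\le n\le N} E\|\delta P_n\|^2 \le C\,E\|-\nabla_x g(X_T^\pi)-P_T^\pi\|^2 + A_0(h)\,\max_{0\le n\le N}E\|\delta P_n\|^2$, where the coefficient $A_0(h)$ is the discrete counterpart of $\overline{A_0}$ in \eqref{eq:thm4:A0bar}. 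Concretely, the two summands inside the braces of \eqref{eq:thm4:A0bar} trace the two routes by which $\delta P$ re-enters itself: the terminal route through $L^{g_x}_x e^{(\overline{A_1}+\overline{A_3})T}$ and the distributed route through $\overline{A_4}$, each premultiplied by the forward coupling strength $\overline{A_2}$ and by kernels obtained as the $h\to 0$ limits of the Riemann sums $\sum_i e^{A_1(\cdot)h}h$ and $\sum_i e^{A_3(\cdot)h}h$. The delicate part is to verify that these sums converge to the integrals $\tfrac{1-e^{-(\overline{A_1}+\overline{A_3})T}}{\overline{A_1}+\overline{A_3}}$ and $\tfrac{e^{(\overline{A_1}+\overline{A_3})T}-1}{\overline{A_1}+\overline{A_3}}$ defining $\overline{A_0}$, with remainders that are uniformly small for small $h$, so that $A_0(h)<1$ whenever $\overline{A_0}<1$. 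Granting this, the $\max_n E\|\delta P_n\|^2$ term is absorbed to the left, giving $\max_n E\|\delta P_n\|^2 \le C\,E\|-\nabla_x g(X_T^\pi)-P_T^\pi\|^2$, and feeding this back into \eqref{eq:lemma2:x} yields the same bound for $\max_n E\|\delta X_n\|^2$.

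It remains to control the diffusion component, for which I would return to inequality \eqref{deltaP_2} established within the proof of Lemma \ref{lem:estimate1}. Choosing $\lambda_2>L^F_q$ makes the coefficient $1-L^F_q\lambda_2^{-1}$ strictly positive; rearranging \eqref{deltaP_2} and summing over $i=0,\dots,N-1$ telescopes the $E\|\delta P_i\|^2$ terms into $E\|\delta P_N\|^2-E\|\delta P_0\|^2$, so that $\sum_{i} E\|\delta Q_i\|^2 h$ is bounded by $E\|\delta P_N\|^2$ together with $(2k^F+\lambda_2)h\sum_i E\|\delta P_i\|^2$ and $L^F_x\lambda_2^{-1}\sum_i E\|\delta X_i\|^2 h$, all of which are already controlled by the loss through the previous steps. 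Combining this with the order-$h$ implicit-scheme estimate \eqref{estimate:implicit} for $\int_0^T E\|Q_t-\bar Q_t^\pi\|^2\,dt$, and then reassembling the three triangle-inequality splits and taking the supremum over $t\in[0,T]$, yields \eqref{eq:thm4:bound} with a constant $C$ depending only on $E\|x_0\|^2$, the set $\mathscr{L}$, $T$, and the chosen parameters $\lambda_1,\lambda_2,\lambda_3$.
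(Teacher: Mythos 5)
Your proposal is correct and takes essentially the same approach as the paper's proof: comparing the explicit Euler and implicit schemes as two solutions of \eqref{formulation3} via Lemma \ref{lem:estimate1}, splitting the terminal mismatch with an RMS--GM parameter $\lambda_3$, absorbing the self-referential $\max_n E\|\delta P_n\|^2$ term once the discrete coefficient $A(h)$ (whose $h\to 0$ limit is the $(1+\lambda_3)$-perturbed $\overline{A_0}$) falls below $1$, recovering the $Q$-estimate by telescoping \eqref{deltaP_2} with a re-chosen $\lambda_2$, and concluding via the triangle inequality against Theorem \ref{convergence_im}. The only cosmetic differences are that the paper organizes the absorption through the exponentially weighted maxima $\mathcal{X}$ and $\mathcal{P}$ and fixes $\lambda_2=2L^F_q$ (or $\lambda_2=1$ when $L^F_q=0$) in the $Q$-step, both of which your argument already covers.
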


\begin{proof}

Let $X_{t_i}^{\pi, 1}=X_{t_i}^\pi, P_{t_i}^{\pi, 1}=P_{t_i}^\pi, Q_{t_i}^{\pi, 1}=$ $Q_{t_i}^\pi$, given by the Euler scheme \eqref{eq:euler}, and $X_{t_i}^{\pi, 2}=\bar{X}_{t_i}^\pi, P_{t_i}^{\pi, 2}=\bar{P}_{t_i}^\pi, Q_{t_i}^{\pi, 2}=\bar{Q}_{t_i}^\pi$, given by the implicit scheme \eqref{implicit_scheme}. Since both of these schemes solve \eqref{formulation3}, we can apply Lemma \ref{lem:estimate1}. In what follows, we adopt the notation of Lemma \ref{lem:estimate1}.

Through the same reasoning in \cite{hanlong2020}, we first apply the RMS-GM inequality for any $\lambda_3>0$,
\begin{equation}\label{eq:thm4:deltaP_N}
E\left\|\delta P_N\right\|^2 
= E\left\| -\nabla_x g\left(\bar{X}_T^\pi\right)-P_T^\pi\right\|^2 
\leq \begin{aligned}[t]
    & \left(1+\lambda_3^{-1}\right) E\left\| -\nabla_x g\left(X_T^\pi\right)-P_T^\pi\right\|^2\\ 
    &+ L^{g_{x}}_x \left(1+\lambda_3\right) E\left\|\delta X_N\right\|^2 ,
\end{aligned}
\end{equation}
where we also used that $\nabla_x g$ is Lipschitz continuous.

Let 
\begin{equation}\label{eq:thm4:def:LR}
\mathcal{X} :=\max_{0 \leq n \leq N} e^{-A_1 n h} E\left\|\delta X_n\right\|^2, \quad 
\mathcal{P} :=\max_{0 \leq n \leq N} e^{A_3 n h} E\left\|\delta P_n\right\|^2 .
\end{equation}
From the upper bound in \eqref{eq:lemma2:x}, it follows that
\begin{equation}\label{eq:thm4:deltaXn}
e^{-A_1 n h} E\left\|\delta X_n\right\|^2 \leq A_2 \sum_{i=0}^{n-1} e^{-A_1(i+1) h} E\left\|\delta P_i\right\|^2 h \leq A_2 \mathcal{P} \sum_{i=0}^{n-1} e^{-A_1(i+1) h-A_3 i h} h.
\end{equation}
Similarly, \eqref{eq:lemma2:p} and \eqref{eq:thm4:def:LR} imply the following estimate,  extending the scalar case in \cite{hanlong2020} to the vector-valued setting, 
\begin{equation}\label{eq:thm4:deltaPn}
\begin{aligned}[t]
e^{A_3 n h} E\left\|\delta P_n\right\|^2&\leq  e^{A_3 T} E\left\|\delta P_N\right\|^2+A_4 \sum_{i=n}^{N-1} e^{A_3 i h} E\left\|\delta X_i\right\|^2 h \\
&\leq \begin{aligned}[t]
    &e^{A_3 T} \left(1+\lambda_3^{-1}\right) E\left\| - \nabla_x g\left(X_T^\pi\right)-P_T^\pi\right\|^2\\ 
    &+ \left( L^{g_{x}}_x\left(1+\lambda_3\right) e^{\left(A_1+A_3\right) T}+A_4 \sum_{i=n}^{N-1} e^{\left(A_1+A_3\right) i h} h\right)  \mathcal{X},
\end{aligned}
\end{aligned}
\end{equation}
where we used \eqref{eq:thm4:deltaP_N} to obtain the second inequality.
Combining \eqref{eq:thm4:deltaPn}, \eqref{eq:thm4:deltaXn} with \eqref{eq:thm4:def:LR} thus yields
\begin{align}
\mathcal{X} &\leq A_2 h e^{-A_1 h} \frac{e^{-\left(A_1+A_3\right) T}-1}{e^{-\left(A_1+A_3\right) h}-1} \mathcal{P},\label{eq:thm4:bound:L} \\
\mathcal{P} &\leq \begin{aligned}[t]
    &e^{A_3 T}\left(1+\lambda_3^{-1}\right) E\left\| - \nabla_x g\left(X_T^\pi\right)-P_T^\pi\right\|^2\\
    &+  \left(L^{g_{x}}_x\left(1+\lambda_3\right) e^{\left(A_1+A_3\right) T}+A_4 h \frac{e^{\left(A_1+A_3\right) T}-1}{e^{\left(A_1+A_3\right) h}-1}\right) \mathcal{X} .\label{eq:thm4:bound:R}
\end{aligned}
\end{align}
Now, we define
\begin{equation}\label{eq:thm4:def:A}
A(h) \coloneqq A_2 h e^{-A_1 h} \frac{e^{-\left(A_1+A_3\right) T}-1}{e^{-\left(A_1+A_3\right) h}-1} \left(L^{g_{x}}_x\left(1+\lambda_3\right) e^{\left(A_1+A_3\right) T}+A_4 h \frac{e^{\left(A_1+A_3\right) T}-1}{e^{\left(A_1+A_3\right) h}-1}\right),
\end{equation}
by which \eqref{eq:thm4:bound:R} can be rewritten, as follows
\begin{equation}
\mathcal{P} \leq e^{A_3 T}\left(1+\lambda_3^{-1}\right) E\left\|-\nabla_x g\left(X_T^\pi\right)-P_T^\pi\right\|^2 + A(h) \mathcal{P}.
\end{equation}
Hence, whenever $A(h)<1$, the estimates in \eqref{eq:thm4:bound:L} and \eqref{eq:thm4:bound:R} take the following form
\begin{align}
\mathcal{X} &\leq  \frac{e^{A_3 T} \left(1+\lambda_3^{-1}\right) A_2 h e^{-A_1 h} \frac{e^{-\left(A_1+A_3\right) T}-1}{e^{-\left(A_1+A_3\right) h}-1} E\left\| -\nabla_x g\left(X_T^\pi\right)-P_T^\pi\right\|^2}{1-A(h)}, \\
\mathcal{P} &\leq  \frac{e^{A_3 T}\left(1+\lambda_3^{-1}\right) E\left\|-\nabla_x g\left(X_T^\pi\right)-P_T^\pi\right\|^2}{1-A(h)}.
\end{align}
Recall $\lim_{h\to 0} A_i(h)=\overline{A_i}$. From \eqref{eq:thm4:def:A}, it then follows that
\begin{equation}\label{eq:thm4:lim_h A}
\lim_{h \rightarrow 0} A(h)=\overline{A_2} \frac{1-e^{-\left(\overline{A_1}+\overline{A_3}\right) T}}{\overline{A_1}+\overline{A_3}}
\left( L^{g_{x}}_x\left(1+\lambda_3\right) e^{\left(\overline{A_1}+\overline{A_3}\right) T}+\overline{A_4} \frac{e^{\left(\overline{A_1}+\overline{A_3}\right) T}-1}{\overline{A_1}+\overline{A_3}}\right) .
\end{equation}
Recall the definition of $\overline{A_0}$ in \eqref{eq:thm4:A0bar}. Whenever $\overline{A_0}<1$, comparing \eqref{eq:thm4:lim_h A} with $\overline{A_0}$, there exists a sufficiently small $\lambda_3>0$ such that $\lim_{h \rightarrow 0} A(h)<1$ and therefore $A(h)<1$ holds for sufficiently small $h$.
Then we can write, for sufficiently small $h$ and any $\epsilon>0$,
\begin{align}
\mathcal{X} &\leq(1+\epsilon) \frac{\overline{A_2} e^{\overline{A_3} T}}{1-\lim_{h \rightarrow 0} A(h)} \left(1+\lambda_3^{-1}\right) \frac{1-e^{-\left(\overline{A_1}+\overline{A_3}\right) T}}{\overline{A_1}+\overline{A_3}} E\left\|-\nabla_x g\left(X_T^\pi\right)-P_T^\pi\right\|^2,\\
\mathcal{P} &\leq(1+\epsilon)\frac{e^{\overline{A_3} T}}{1-\lim_{h\rightarrow 0} A(h)} \left(1+\lambda_3^{-1}\right) E\left\|-\nabla_x g\left(X_T^\pi\right)-P_T^\pi\right\|^2.
\end{align}
 Note that we have a slight difference in these two estimates compared to \cite{hanlong2020}, where we keep $\lambda_3$ on the right-hand side as it has a significant impact on the term $(1+\lambda_3^{-1})$ and should be chosen appropriately. 
Consequently, by fixing $\epsilon$ and choosing the corresponding small $h>0$, we obtain the following error estimates
\begin{align}
\max_{0 \leq n \leq N} E\left\|\delta X_n\right\|^2 &\leq e^{ A_1 T \vee 0} \mathcal{X} \leq C\left(\lambda_1, \lambda_2, \lambda_3 \right) E\left\|-\nabla_x g\left(X_T^\pi\right)-P_T^\pi\right\|^2, \label{estimate_X}\\
\max_{0 \leq n \leq N} E\left\|\delta P_n\right\|^2 &\leq e^{\left(- A_3 T\right) \vee 0} \mathcal{P} \leq C\left(\lambda_1, \lambda_2, \lambda_3 \right) E\left\|-\nabla_x g\left(X_T^\pi\right)-P_T^\pi\right\|^2 . \\ \label{estimate_P}
\end{align}
Finally, in order to estimate $E\left\|\delta Q_n\right\|^2$ for $0\leq n\leq N-1$, we consider estimate \eqref{deltaP_2} from the proof of Lemma \ref{lem:estimate1}, in which $\lambda_2$ can take any value such that $\lambda_2\geq L^F_q$. In particular, when $L^F_q \neq 0$, we choose $\lambda_2=2 L^F_q$ and obtain
\begin{equation}
\frac{1}{2} E\left\|\delta Q_i\right\|^2 h 
\leq
\frac{L^F_x}{2 L^F_q} E\left\|\delta X_i\right\|^2 h
+ E\left\|\delta P_{i+1}\right\|^2   
- \left(1-\left(2 k^F + 2L^F_q \right) h\right) E\left\|\delta P_i\right\|^2.
\end{equation}
Summing from 0 to $N-1$ therefore gives
\begin{align}
\sum_{i=0}^{N-1} E\left\|\delta Q_i\right\|^2 h 
& \leq \begin{aligned}[t]
    &\frac{L^F_x T}{L^F_q} \max_{0\leq n\leq N} E\left\|\delta X_n\right\|^2\\
    &+ \left( 4\left(k^F+L^F_q\right) T \vee 0 + 2\right) \max_{0\leq n\leq N} E\left\|\delta P_n\right\|^2.
\end{aligned}
\end{align}
Using the estimates established by \eqref{estimate_X} and \eqref{estimate_P}, we collect
\begin{align}\label{estimate_Q}
    \sum_{i=0}^{N-1} E\left\|\delta Q_i\right\|^2 h & \leq C(\lambda_1, \lambda_2, \lambda_3) E\left\|-\nabla_x g\left(X_T^\pi\right)-P_T^\pi\right\|^2.
\end{align}
The case $L^F_q=0$ can be dealt with similarly by choosing $\lambda_2=1$ and the same type of estimate can be derived. 
Finally, combining estimates \eqref{estimate_X}, \eqref{estimate_P} and \eqref{estimate_Q} with Theorem \ref{convergence_im}, we prove our statement.
\end{proof}

\begin{corollary}\label{corollary:control_convergence} Let $\left(X_t, u_t, P_t, Q_t \right)$ be the optimal 4-tuple that solves problem \eqref{stochastic_control_problem}. Under the setting of Theorem \ref{thm:estimate2}, there exists a constant, $C>0$, depending on $E\|x_0\|^2, \mathscr{L}, T, \lambda_1$, and $\lambda_2$, such that for sufficiently small $h$,
\begin{equation}\label{eq:corollary1:bound}
\int_0^T E \| u_t^* - \hat{u}_t^\pi  \|^2 \,dt 
\leq C \big( h + E\left\|-\nabla_x g\left(X_T^\pi\right)-P_T^\pi\right\|^2 \big),
\end{equation}
where $\hat{u}_{t_i}^\pi \coloneqq \mathcal{M}(t, \hat{X}_{t_i}^\pi, \hat{P}_{t_i}^\pi, \hat{Q}_{t_i}^\pi) $, and $\hat{u}_{t}^\pi = \hat{u}_{t_i}^\pi$ for $t \in\left[t_i, t_{i+1}\right)$.
\end{corollary}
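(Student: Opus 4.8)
The plan is to transfer the error bound of Theorem~\ref{thm:estimate2} from the state--adjoint triple to the control process by exploiting the regularity of the feedback map $\mathcal{M}$. First I would invoke Remark~\ref{remark:4}, which guarantees that the solution $(X_t,P_t,Q_t)$ of the FBSDE~\eqref{eq:FBSDE} coincides with the optimal triple $(X_t^*,P_t^*,Q_t^*)$, so that the optimal control admits the feedback representation $u_t^* = \mathcal{M}(t,X_t,P_t,Q_t)$ from~\eqref{feedback}. Since on each subinterval $t\in[t_i,t_{i+1})$ one has $\hat{u}_t^\pi = \mathcal{M}(t_i,\hat{X}_t^\pi,\hat{P}_t^\pi,\hat{Q}_t^\pi)$, the pointwise difference can be split, via the triangle inequality and the elementary bound $\norm{a+b}^2\le 2\norm{a}^2+2\norm{b}^2$, into a \emph{time-increment} term $\mathcal{M}(t,X_t,P_t,Q_t)-\mathcal{M}(t_i,X_t,P_t,Q_t)$ and a \emph{spatial} term $\mathcal{M}(t_i,X_t,P_t,Q_t)-\mathcal{M}(t_i,\hat{X}_t^\pi,\hat{P}_t^\pi,\hat{Q}_t^\pi)$.

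Then I would estimate the two terms separately using Assumption~\ref{assumption:holder}(2). The time-increment term is controlled by the uniform H\"older-$\tfrac12$ continuity of $\mathcal{M}$ in $t$, giving a bound of order $\abs{t-t_i}\le h$ after squaring; integrated over $[0,T]$ this contributes a term of order $h$, which is absorbed into the right-hand side of~\eqref{eq:corollary1:bound}. The spatial term is controlled by the Lipschitz continuity of $\mathcal{M}$ in $(x,p,q)$, yielding
\begin{equation*}
\norm{\mathcal{M}(t_i,X_t,P_t,Q_t)-\mathcal{M}(t_i,\hat{X}_t^\pi,\hat{P}_t^\pi,\hat{Q}_t^\pi)}^2 \le C\Big(\norm{X_t-\hat{X}_t^\pi}^2+\norm{P_t-\hat{P}_t^\pi}^2+\norm{Q_t-\hat{Q}_t^\pi}^2\Big).
\end{equation*}

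Next, taking expectations and integrating over $[0,T]$, I would bound $\int_0^T \E\norm{X_t-\hat{X}_t^\pi}^2\,dt$ and $\int_0^T \E\norm{P_t-\hat{P}_t^\pi}^2\,dt$ by $T$ times the corresponding suprema on the left-hand side of~\eqref{eq:thm4:bound}, while $\int_0^T \E\norm{Q_t-\hat{Q}_t^\pi}^2\,dt$ is already in the required form. Applying Theorem~\ref{thm:estimate2} to each of these three integrals then yields the claimed estimate, with the constant $C$ inheriting the dependence on $\E\norm{x_0}^2,\mathscr{L},T,\lambda_1,\lambda_2$ (together with the Lipschitz/H\"older constants of $\mathcal{M}$) and absorbing the residual $O(h)$ contribution of the time-increment term.

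I expect the main, though modest, obstacle to be the careful bookkeeping of the time-argument mismatch: one must use the H\"older-$\tfrac12$ regularity of $\mathcal{M}$ in $t$ to compare $u_t^*$, evaluated at the continuous time $t$, against the piecewise-constant $\hat{u}_t^\pi$, evaluated at the grid time $t_i$, and verify that this extra discretization contribution is genuinely of order $h$ rather than $\sqrt{h}$ after squaring and integration, so that it does not degrade the rate relative to Theorem~\ref{thm:estimate2}.
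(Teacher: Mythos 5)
Your proposal is correct and follows essentially the same route as the paper's proof: represent $u_t^* = \mathcal{M}(t, X_t, P_t, Q_t)$ via Remark~\ref{remark:4}, use the Lipschitz continuity of $\mathcal{M}$ from Assumption~\ref{assumption:holder} to reduce the control error to the errors in $(X_t,P_t,Q_t)$, and conclude by Theorem~\ref{thm:estimate2} (bounding the $X$ and $P$ integrals by $T$ times the suprema). The only divergence is that the paper evaluates both feedback maps at the same time argument $t$, so no time-increment term appears, whereas you additionally freeze the discrete control at the grid time $t_i$ and then correctly absorb the resulting H\"older-$\tfrac{1}{2}$ contribution, which after squaring and integrating is $O(h)$, into the right-hand side --- a harmless extra refinement that does not alter the argument or the rate.
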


\begin{proof}
Recall the optimal feedback map obtained by using the SMP,
\begin{equation}
    u_t = \mathcal{M}(t, X_t, P_t, Q_t), \quad \forall t \in [0, T],
\end{equation}
where $\left(X_t, u_t, P_t, Q_t \right)$ is the optimal 4-tuple that solves stochastic control problem \eqref{stochastic_control_problem} and the FBSDE \eqref{eq:FBSDE}, see Remark \ref{remark:4}. Then, by Theorem \ref{thm:estimate2} and the Lipschitz continuity of $\mathcal{M}$ in Assumption \ref{assumption:holder}, we immediately have
\begin{equation}
\begin{aligned}
\int_0^T E \left\| u_t - \hat{u}_t^\pi  \right\|^2 \,dt
& = \int_0^T E \left\| \mathcal{M}(t, X_t, P_t, Q_t) - \mathcal{M}(t, \hat{X}_{t_i}^\pi, \hat{P}_{t_i}^\pi, \hat{Q}_{t_i}^\pi) \right\|^2 \,dt  \\
& \leq \begin{aligned}[t]
    L \int_0^T \Big(E\left\|X_t-\hat{X}_t^\pi\right\|^2 
&+ E\left\|P_t-\hat{P}_t^\pi\right\|^2 \\
&+ E\left\|Q_t-\hat{Q}_t^\pi\right\|^2 \Big)  \, dt 
\end{aligned}\\
& \leq C \big( h + E\left\|-\nabla_x g\left(X_T^\pi\right)-P_T^\pi\right\|^2 \big).
\end{aligned}
\end{equation}

\end{proof}
 
\section{Numerical Results}\label{sec5}
In this section, we demonstrate the accuracy and robustness of the proposed scheme. We consider stochastic control problems, both in the case of drift and diffusion control. Algorithm \ref{algorithm} has been implemented in TensorFlow 2.15 , and the experiments were run on a Dell Alienware Aurora R10 machine, equipped with an AMD Ryzen 9 3950X CPU (16 cores, $64 \mathrm{Mb}$ cache, $4.7 \mathrm{GHz}$) and an Nvidia GeForce RTX 3090 GPU (24 Gb). The library used in this paper will be publicly accessible under the \href{https://github.com/balintnegyesi}{github} repository of the second author. In all numerical experiments presented below, we use standard fully-connected feedforward neural networks to parametrize $\mu_0^\pi: \mathbb{R}^d \rightarrow \mathbb{R}^d$ and each $\phi_i^{\pi}: \mathbb{R}^d \rightarrow \mathbb{R}^{d\times m}$, $i=0, \dots, N-1$. Each neural network has two hidden layers of width $100$  and a hyperbolic tangent activation function. All parameters are initialized according to default TensorFlow settings.  We employ the Adam optimizer with default settings to minimize the empirical loss function and use a batch size of $2^{12}=4096$  independent paths of the Brownian motion for each SGD step during training.  We apply an adaptive strategy for training the networks. In particular, for $N=2, 5, 10, 20, 50, 100$, we apply an exponential learning rate schedule starting from an initial learning rate $\eta_0^N$, and a decay rate is set to $10^{-6}/\eta_0^N$. An adaptive number of iteration steps $I^N$ also applies to each $N$ \footnote{We set the initial learning rates as \num{5e-4}, \num{5e-4}, \num{1e-3}, \num{2e-3}, \num{4e-3}, \num{8e-3}, \num{8e-3}, and the number of iterations as $2^{12}$, $2^{12}$, $2^{13}$, $2^{14}$, $2^{15}$, $2^{16}$, $2^{17}$, for $N=2, 5, 10, 20, 50, 100$, respectively.
}.  The Monte Carlo errors are computed over an independent validation sample of $2^{14}=16384$  paths. In order to assess the inherent randomness of the deep BSDE methods and the SGD iterations, we run each experiment 5 times and report on the mean and standard deviation of the resulting independent approximations of Algorithm \ref{algorithm}. Computations were carried out with single floating-point precision. We denote the total approximation errors by $\delta \hat{X}^\pi_n$ at time $t_n$ and similarly for other processes. In the comparison with \cite{andersson2023}, we use $\lambda=1$ as described in their paper,  which is also explained in our Section \ref{sec3}.  We denote by $Y_n^\pi$ the discrete time counterpart of the objective functional in \eqref{stochastic_control_problem}, which we compute similarly to \cite[eq. (3.1)]{andersson2023} via a backward summation, using the relations in Remark \ref{PQ_representation}.

Our numerical examples are given for so-called linear quadratic (LQ) type problems, a special case of \eqref{stochastic_control_problem},  where
\begin{align}\label{eq:LQ_coefficients}
& \bar{b}(t,x,u) = Ax + Bu + \beta,\ \bar{\sigma}_{j}(t, x, u) = C_j x + D_j u + \Sigma_j,\\
& \bar{f}(t, x, u) = \frac{1}{2}(x^\top R_x x + u^\top R_{xu} x + u^\top R_u u),\  g(x)= \frac{1}{2} x^\top G x,
\end{align}
with $A\in\R^{d\times d}$, $B\in\R^{d\times \ell}$, $\beta\in\R^{d}$, $R_x \in \mathbb{R}^{d\times d}$, $R_{xu} \in\R^{l\times d}$, $R_u \in \mathbb{R}^{\ell\times \ell}$ and $G \in \mathbb{R}^{d\times d}$, $R_x$, $R_u$ and $G$ are symmetric, and, for $j=1, \dots, m$, $C_j\in\R^{d\times d}$, $D_j\in\R^{d\times \ell}$, $\Sigma_j \in\R^{d}$.
For $\bar{\sigma}, \Sigma,q \in\mathbb{R}^{d\times m}$, we denote the $j$-th column by $\bar{\sigma}_j, \Sigma_j, q_j$. Utilizing the SMP, we obtain the feedback map for the optimal control
\begin{equation}\label{eq:lq:feedback:hamiltonian}
u = \mathcal{M}(t,x,p,q) = - R_u^{-1} \big( R_{xu} x-B^\top p-\sum_{j=1}^m D_j^\top q_{ j} \big), 
\end{equation}
and therein we obtain the corresponding FBSDE \eqref{eq:FBSDE}  for the algorithm,  by substituting \eqref{eq:lq:feedback:hamiltonian} back into the forward diffusion and adjoint equation.

A reference solution to the LQ problem can be derived semi-analytically through numerically solving a system of ODEs, induced by the HJB equation. To obtain this reference solution for a multi-dimensional setting, we extend a well-known result in the literature \cite{yong1999stochastic} for the case $m=1$, and the detailed derivation is given in the appendix.  In what follows, we compare our approximations to a reference solution obtained through the numerical integration of the Riccati ODEs \eqref{eq:ricatti_ode} in the appendix, using an equidistant time grid with $N_\text{ode}=10^5$, and simulating the coupled FBSDE system in \eqref{eq:FBSDE} on the same time grid with a discrete Euler-Maruyama approximation.

\subsection{Example 1 -- drift control}\label{sec:example1}
\begin{figure}
    \centering
    \begin{subfigure}{\textwidth}
        \centering
        \includegraphics[width=1\textwidth]{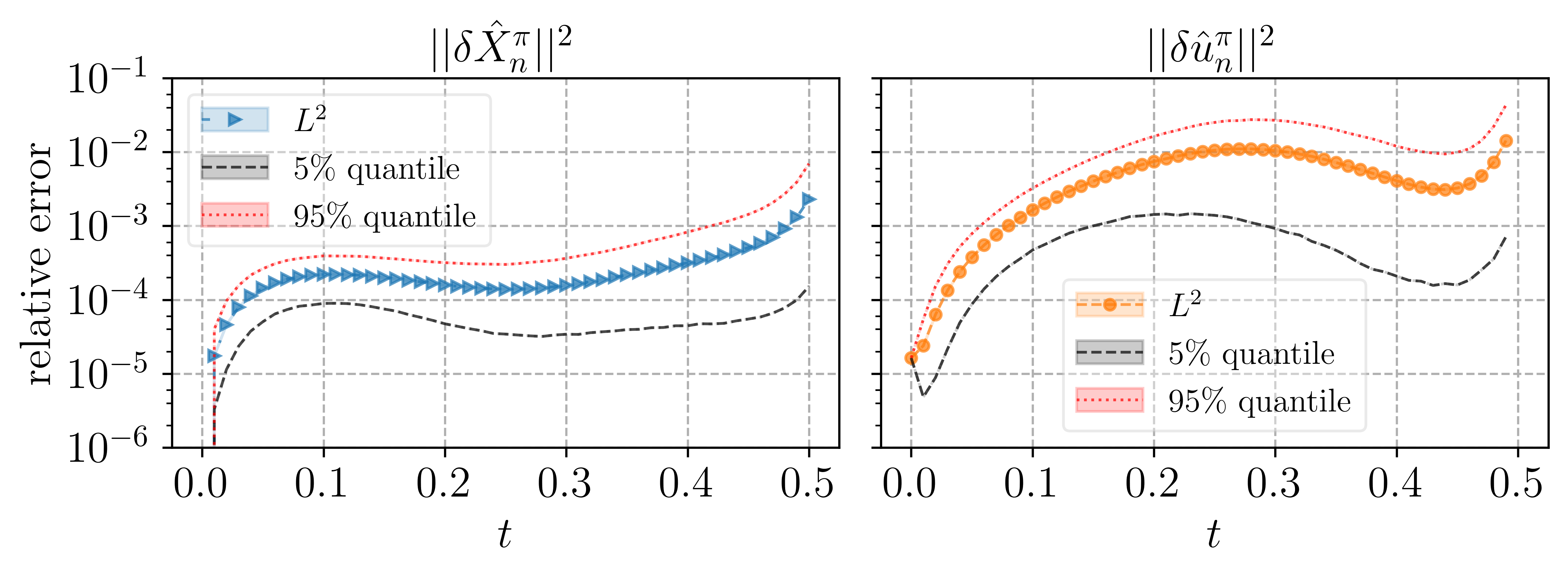}
    \caption{Algorithm \ref{algorithm}.}
    \label{fig:ex1:1:smp}
    \end{subfigure}

    \begin{subfigure}{\textwidth}
        \centering
	\includegraphics[width=1\textwidth]{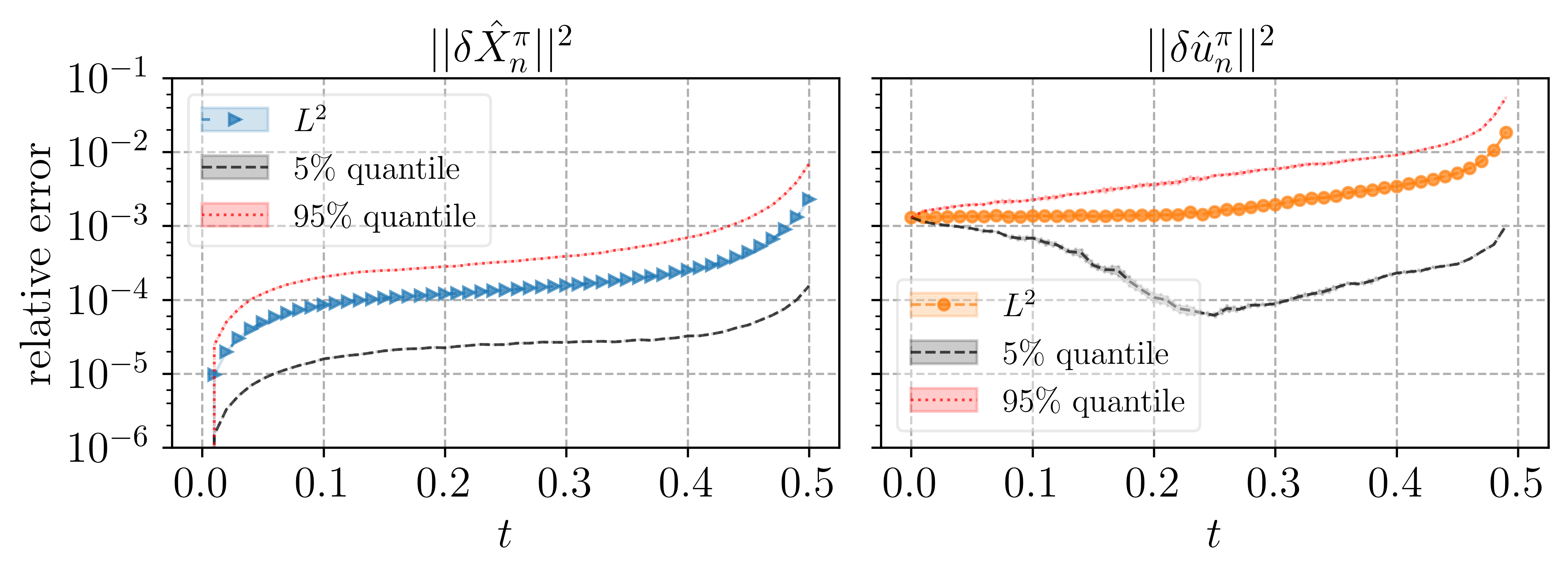}
    \caption{Andersson et al. (2023) in \cite{andersson2023}.}
    \label{fig:ex1:1:andersson}
    \end{subfigure}
    \caption{Example 1, $N=50$. Errors of approximations of the optimally controlled state space and control strategy. On top: results obtained through Algorithm \ref{algorithm} and the SMP. On the bottom: reference method from \cite{andersson2023} using dynamic programming. Lines correspond to the mean of $5$ independent runs of the algorithm, shaded areas to the standard deviation. Graphs computed over an independent Monte Carlo sample of size $M=2^{14}$.}
    \label{fig:ex1:1}
\end{figure}
The following stochastic control problem can be found in \cite[sec. 5.1.2]{andersson2023}. The control space is $\ell=2$ dimensional, whereas for the state space $d=m=6$. The coefficients in \eqref{stochastic_control_problem} are of LQ type and the corresponding matrices in \eqref{eq:LQ_coefficients} are defined, as follows,
\begin{align}\label{eq:coefficients:ex1}
    A &= -\text{diag}([1, 2, 3, 1, 2, 3]),\\
    B &= \begin{pmatrix}
        &1, &1, &0.5, &1, &0, &0\\
        &-1, &1, &1, &-1, &-1, &1
    \end{pmatrix}^\top,\\
    \beta &= -A([-0.2, -0.1, 0, 0, 0.1, 0.2])^\top,\quad C_j=0,\quad D_j=0,\\ \Sigma &= \text{diag}([0.05, 0.25, 0.05, 0.25, 0.05, 0.25]),\quad R_{xu} = 0,\quad
    R_u = 2 I_l,\\
    R_x &= 2 \text{diag}([25, 1, 25, 1, 25, 1]),\quad G=2\text{diag}([1, 25, 1, 25, 1, 25]),
\end{align}
for all $j=1, \dots, m$.
We consider a deterministic initial condition, $x_0=(0.1, \dots, 0.1)$, and a terminal time of $T=1/2$.
We remark that this problem falls under the class of drift control problems, and, recalling the discussion following Assumption \ref{assumption:drift_control}, it in particular satisfies the conditions of Theorem \ref{thm:estimate2} and Corollary \ref{corollary:control_convergence} over any compact subset of the state space.

In Figure \ref{fig:ex1:1}, the relative approximation errors of the optimal control strategy and the optimally controlled state space are compared to the method proposed in \cite{andersson2023},  explained in Section \ref{sec3}.  We remark that, as found in the aforementioned paper, the deep BSDE method on the dynamic programming equation fails to converge to the true solution. As it can be seen our deep BSDE formulation, via the SMP in Algorithm \ref{algorithm}, leads to accurate and robust approximations of both the controlled diffusion and the optimal control.
The relative approximation errors of both processes are $\mathcal{O}(10^{-3})$ for over $95\%$ of the sampling paths. Even though, standard to the FBSDE literature, our theory only establishes error bounds in the $L^2$ sense, see \eqref{eq:thm4:bound} and \eqref{eq:corollary1:bound}, the tight quantile estimates in Figure \ref{fig:ex1:1} suggest that convergence is achieved in a stronger sense as well. Increasing the number of discretization points $N$ further reduces the errors.

The convergence of the proposed algorithm is collected in Figure \ref{fig:conv:ex1}, where the absolute errors of all corresponding processes are plotted for different values of $N$. We recall that the insights of Theorem \ref{thm:estimate2} suggest that these errors admit to an upper bound depending on the \emph{a-posteriori estimate} which is defined as the sum of time step size  $h=T/N$ and the value of the objective functional $E||-\nabla_x g(\hat{X}_N^\pi)-\hat{P}_N^\pi||^2$, see \eqref{eq:thm4:bound}.  Indeed, Figure \ref{fig:conv:ex1} confirms our theoretical findings, where the approximation errors of all processes decay with a rate of at least $\mathcal{O}(h)$. Figure \ref{fig:a_post:ex1} depicts the convergence of the components of the \emph{a-posteriori estimate}. As can be seen from the right end of the curves, once $N$ is large, the optimization problem in Algorithm \ref{algorithm} gets more complex,  and in the case of $N=100$ our optimization strategy only manages to yield a slight decrease for terminal loss, leading to a slower convergence. Note that for a given $N$ whenever the loss functional corresponding to \eqref{euler_objective} is orders of magnitude smaller than the time step size, the complete \emph{a-posteriori estimate} is dominated by the discretization component. Therefore, we still obtain an empirical convergence rate of the \emph{a-posteriori estimate} of  $\mathcal{O}(h^{1.07})$ . In order to preserve convergence for very fine time grids, one needs to make sure the terminal loss is sufficiently small. A good practical guideline, as implied by Theorem \ref{thm:estimate2} and the discussion above, is to ensure that the loss functional's value is significantly smaller than $h$.

The relative $L^2$ approximation errors over time are depicted in Figure \ref{fig:in_time:ex1}, for $N=100$. The method yields highly accurate approximations for all time steps, and of a similar magnitude for all processes. It is worth to mention the accuracy at $t=0$, where no discretization error from the reference solution is present.
All error measures are collected in Table \ref{tab:ex1}. In line with the figures, we see that convergence is achieved in the natural norms of all processes. The resulting approximations are robust regardless of the underlying randomness of the Monte Carlo machinery, standard deviations of the errors are orders of magnitude smaller than their corresponding means. As shown in the last column, Algorithm \ref{algorithm} also offers a competitive runtime for high-dimensional problems, as an SGD iteration step takes approximately  $0.102$   seconds on average when $N=100$. In our experiments, this beats the runtime of \cite{andersson2023} with a factor of $5$  due to the lack of the extra backward summation step considered therein.
\begin{figure}
    \centering
    \begin{subfigure}{0.49\textwidth}
        \centering
        \includegraphics[width=1\textwidth]{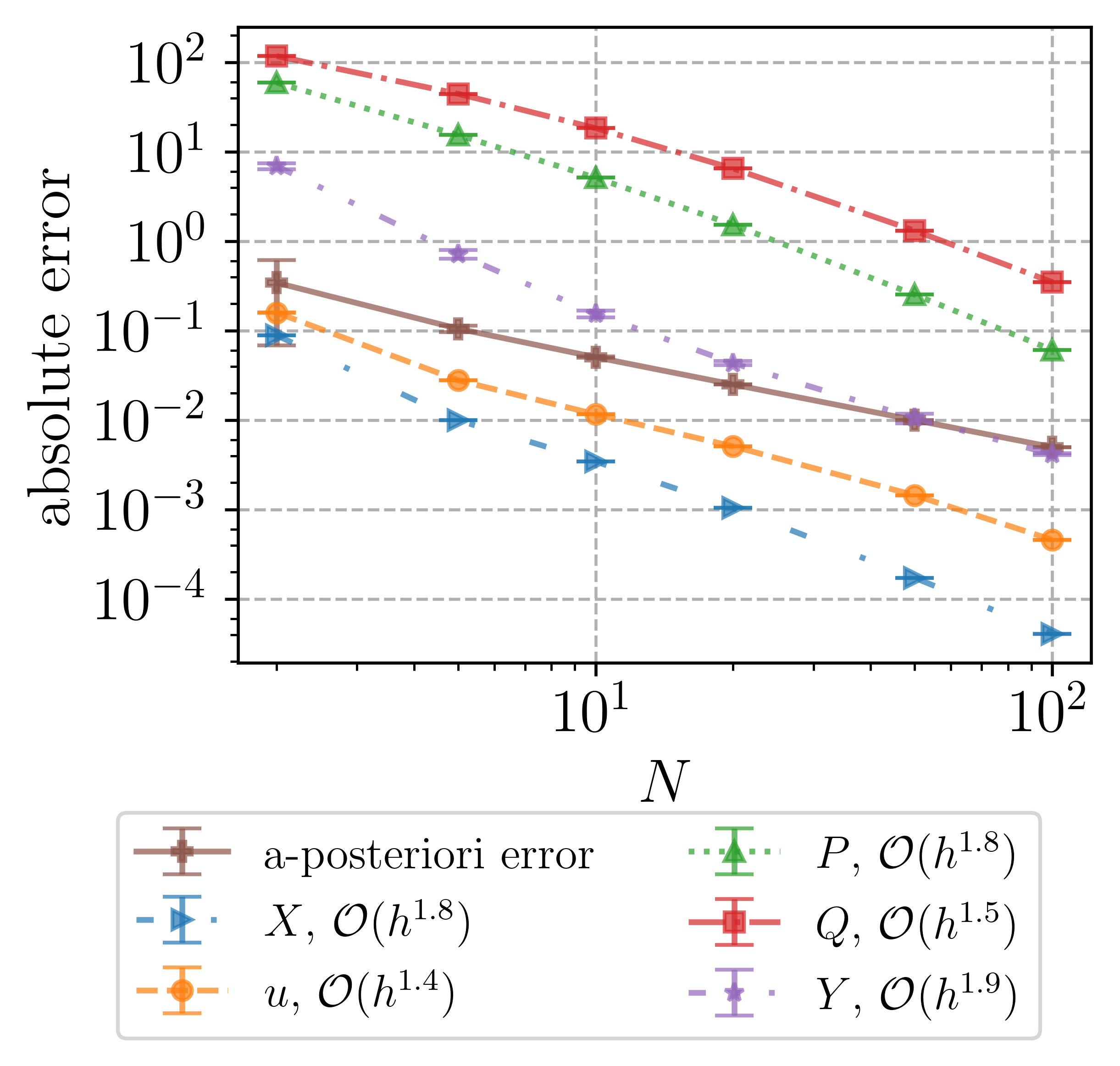}
    \caption{Example 1.}
    \label{fig:conv:ex1}
    \end{subfigure}
    \begin{subfigure}{0.49\textwidth}
        \centering
        \includegraphics[width=1\textwidth]{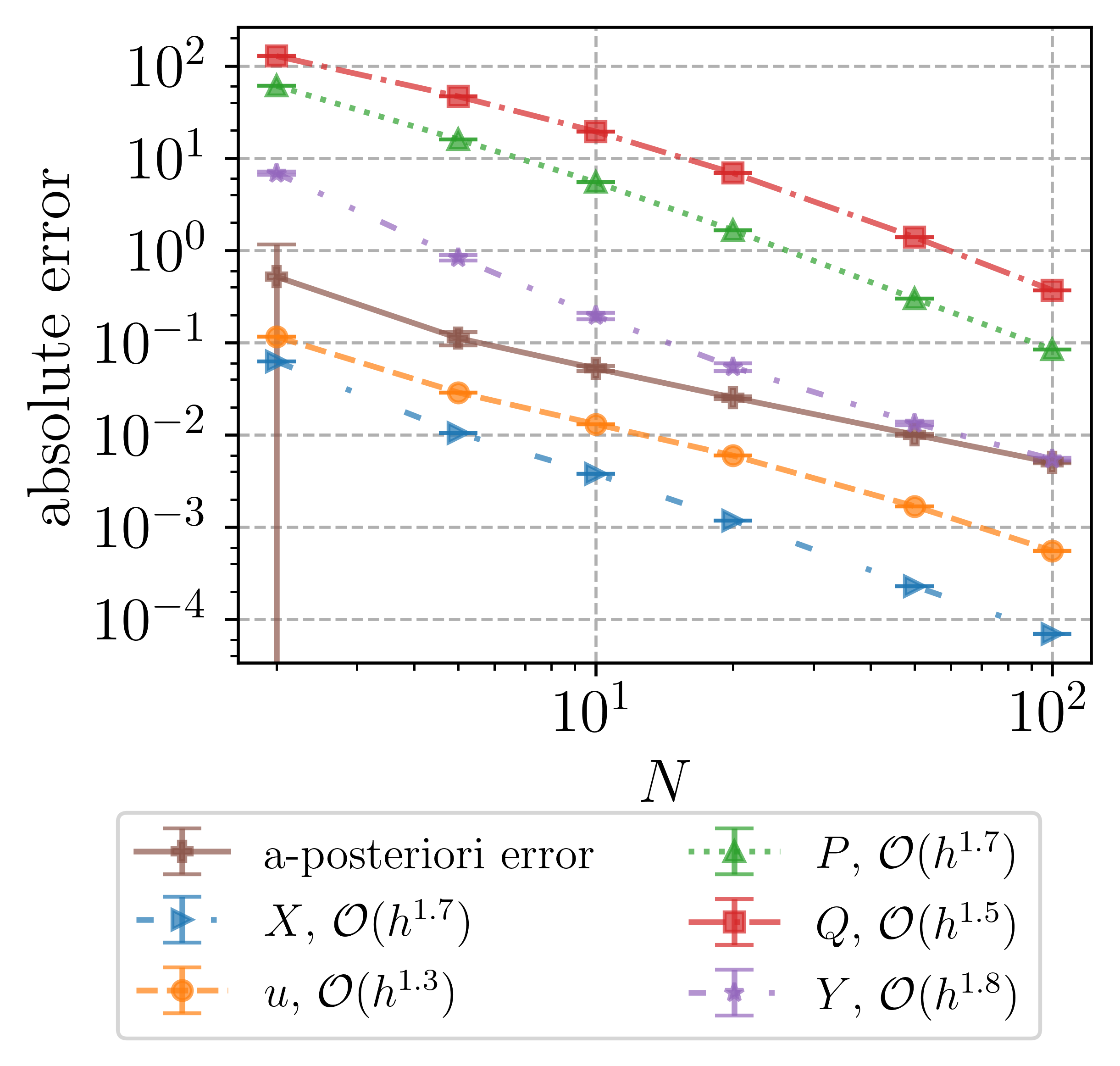}
    \caption{Example 2.}
    \label{fig:conv:ex2}
    \end{subfigure}
    \caption{Convergence and empirical convergence rates of Algorithm \ref{algorithm} over $N$. Lines correspond to the mean of 5 independent runs of the algorithm, error bars to the standard deviation. Graphs compute over an independent Monte Carlo sample of size $M=2^{14}$.}
    \label{fig:conv}
\end{figure}

\begin{figure}
    \centering
    \begin{subfigure}{0.49\textwidth}
        \centering
        \includegraphics[width=1\textwidth]{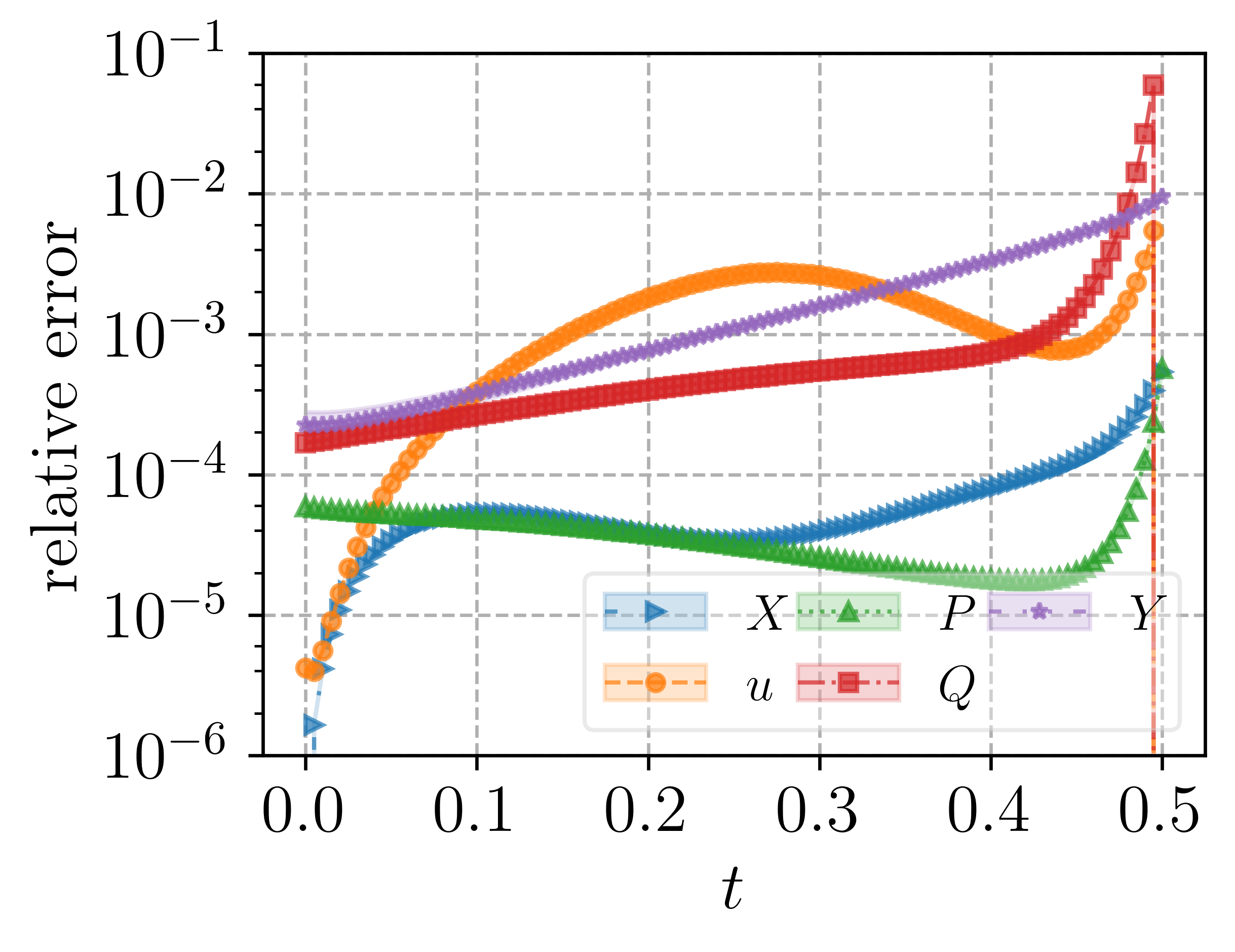}
    \caption{Example 1.}
    \label{fig:in_time:ex1}
    \end{subfigure}
    \begin{subfigure}{0.49\textwidth}
        \centering
        \includegraphics[width=1\textwidth]{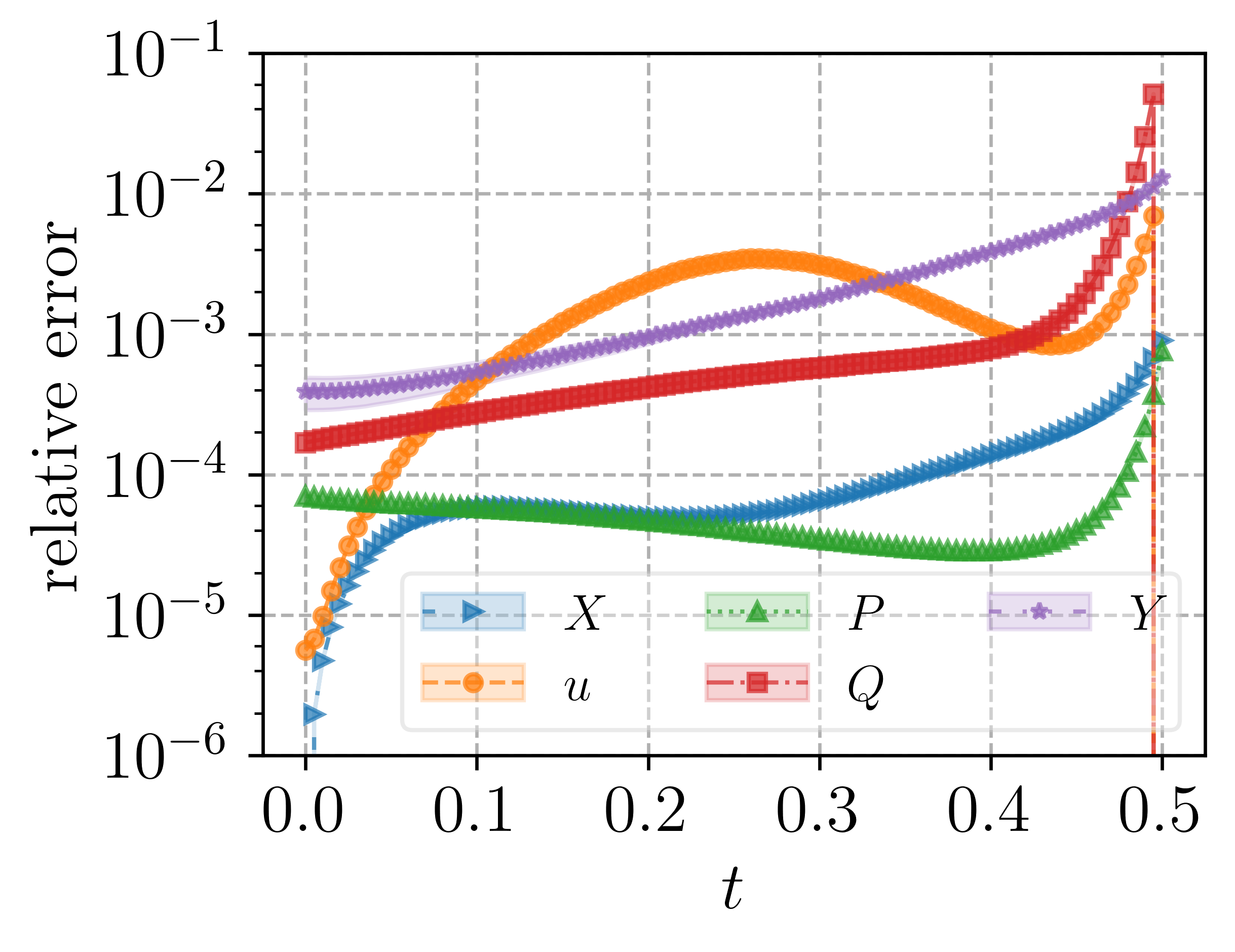}
    \caption{Example 2.}
    \label{fig:in_time:ex2}
    \end{subfigure}
    \caption{Relative $L^2$ approximation errors over time, $N=100$. Lines correspond to the mean of 5 independent runs of the algorithm, shaded areas to the standard deviation. Graphs computed over an independent Monte Carlo sample of size $M=2^{14}$.}
    \label{fig:in_time}
\end{figure}
\begin{figure}
    \centering
    \begin{subfigure}{0.49\textwidth}
        \centering
        \includegraphics[width=1\textwidth]{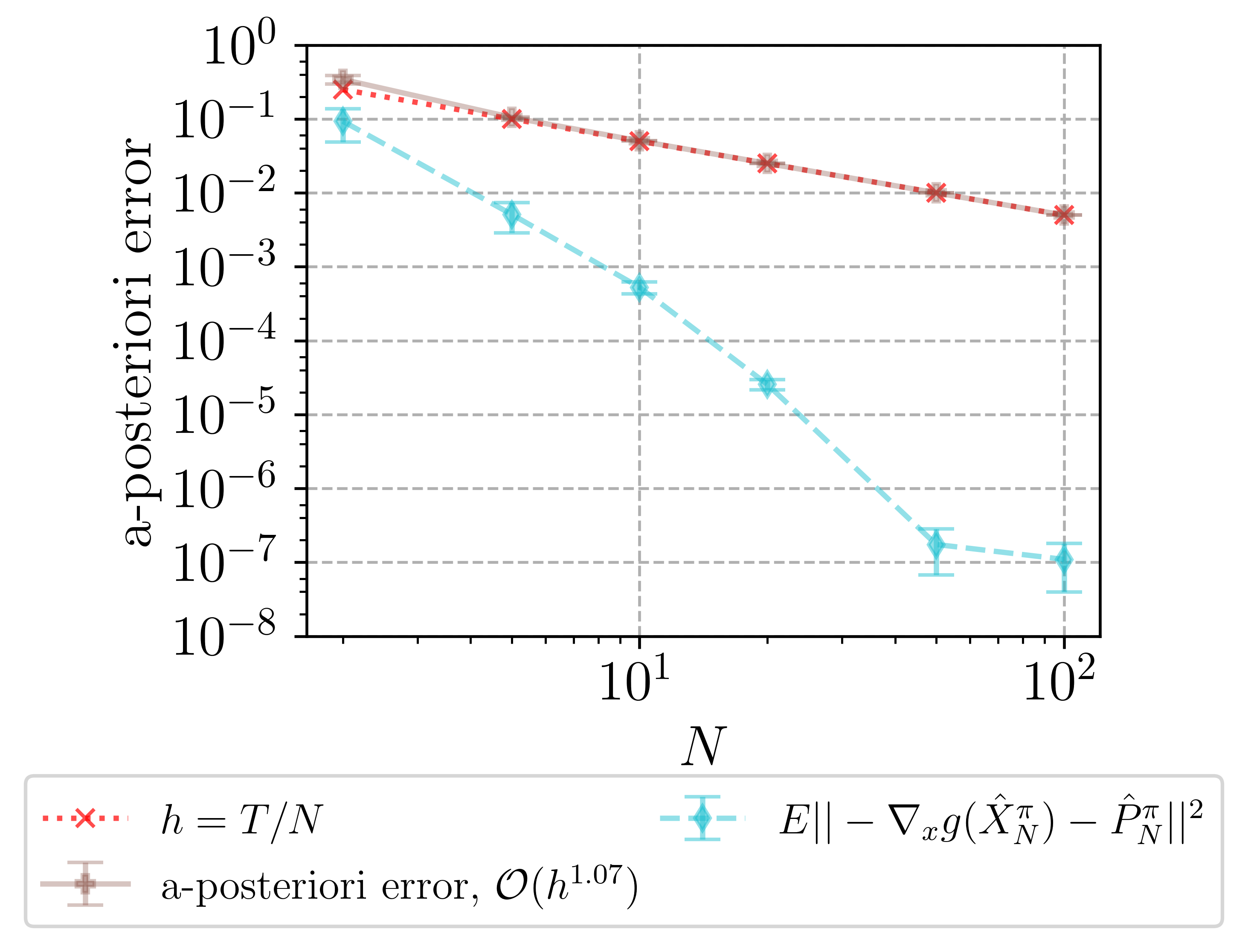}
    \caption{Example 1.}
    \label{fig:a_post:ex1}
    \end{subfigure}
    \begin{subfigure}{0.49\textwidth}
        \centering
        \includegraphics[width=1\textwidth]{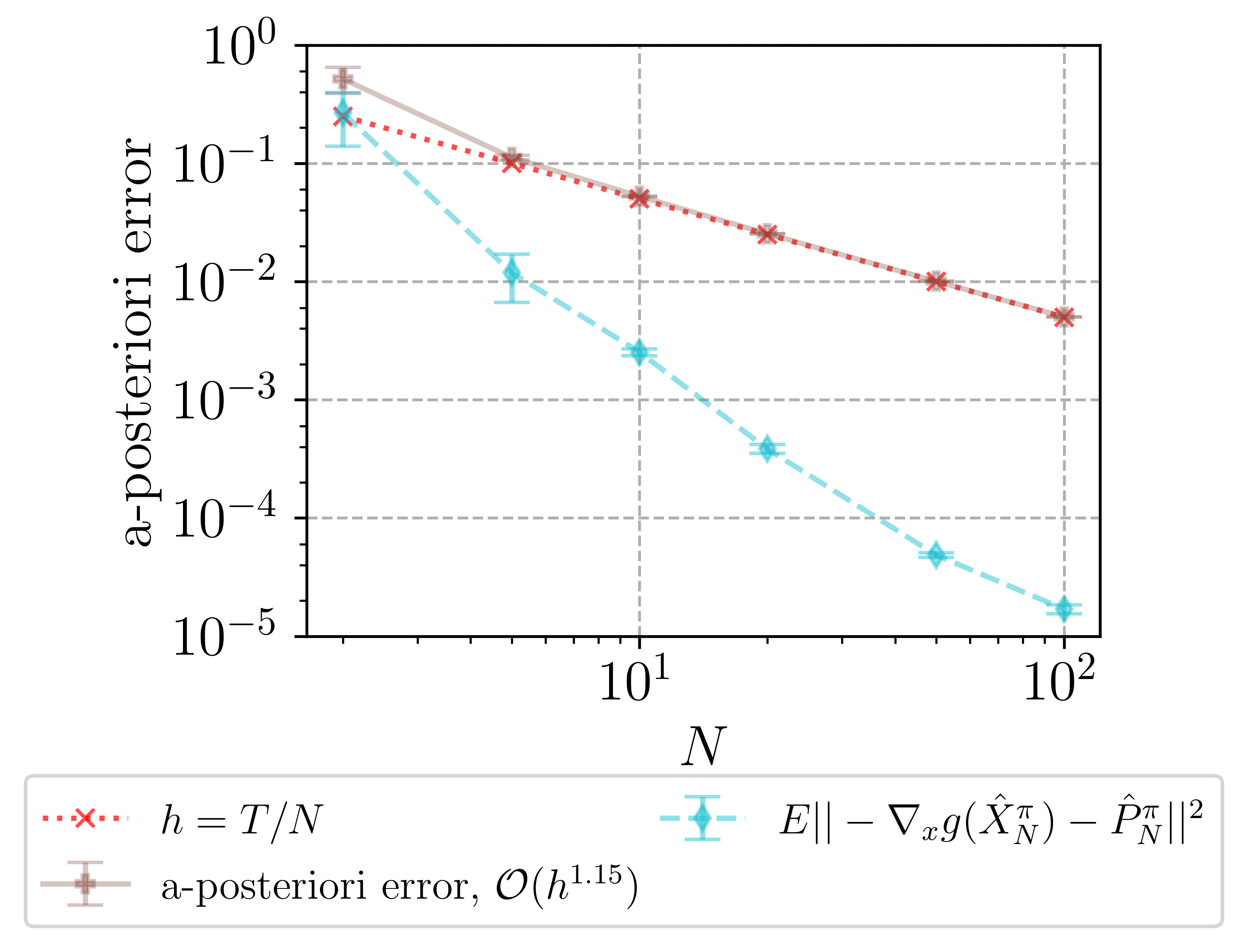}
    \caption{Example 2.}
    \label{fig:a_post:ex2}
    \end{subfigure}
    \caption{Convergence of the a-posteriori error estimate defined in \eqref{eq:thm4:bound}. Lines correspond to the mean of 5 independent runs of the algorithm, error bars to the standard deviation. Graphs compute over an independent Monte Carlo sample of size $M=2^{14}$.}
    \label{fig:a_post}
\end{figure}
\begin{figure}
    \centering
    \includegraphics[width=\textwidth]{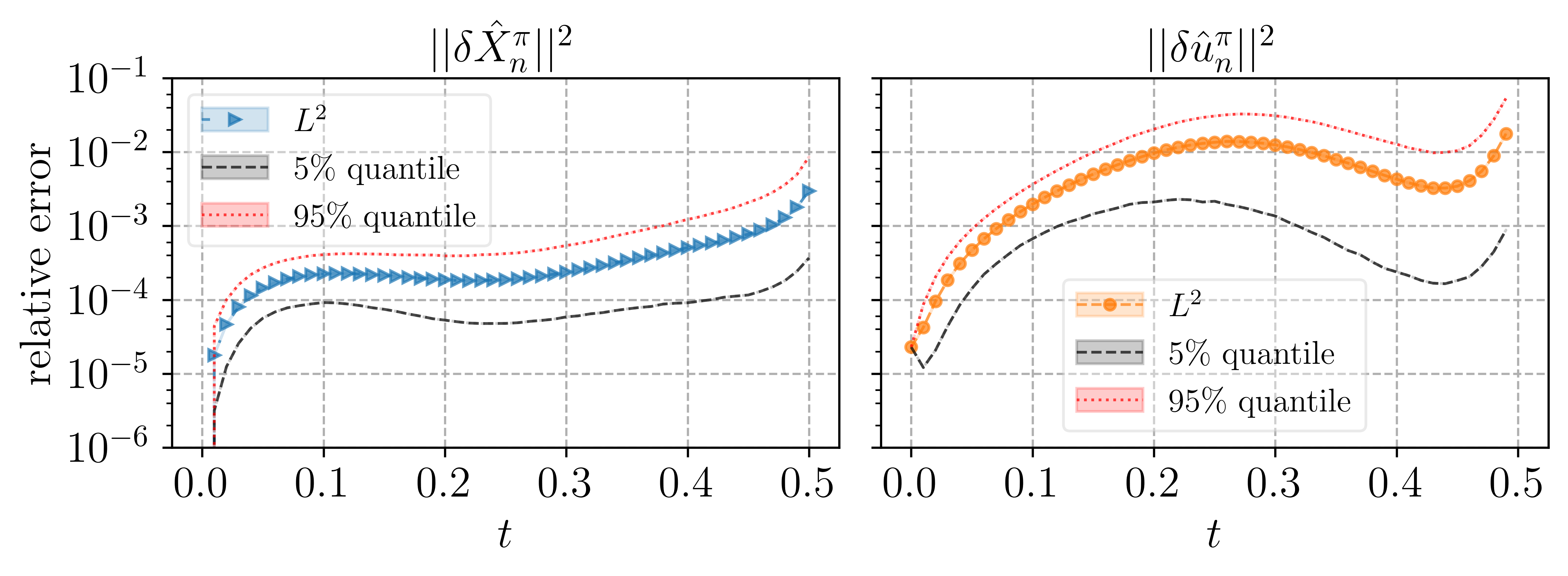}
    \caption{Example 2, $N=50$. Errors of the approximations of the optimally controlled state space and control strategy through Algorithm \ref{algorithm} and the SMP. Lines correspond to the mean of $5$ independent runs of the algorithm, shaded areas to the standard deviation. Graphs computed over an independent Monte Carlo sample of size $M=2^{14}$.}
    \label{fig:ex2}
\end{figure}

\begin{sidewaystable}
\centering
\caption{Example 1. Error measures for different values of $N$. Figures correspond to the mean (standard deviation) of $5$ independent runs of the algorithm. Expectations computed over an independent Monte Carlo sample of size $M=2^{14}$.}\label{tab:ex1}

\resizebox{\textheight}{!}{\begin{tabular}{llllllll}
\toprule
 & max. $E||\delta \hat{X}_n^\pi||^2$ & max. $E||\delta \hat{P}_n^\pi||^2$ & avg. $E||\delta \hat{Q}_n^\pi||^2$ & avg. $E||\delta \hat{u}_n^\pi||^2$ & $E||-\nabla_x g(\hat{X}_N^\pi)-\hat{P}_N^\pi||^2$ & $|\delta \hat{Y}_0^\pi|$ & iter. time (s) \\
\midrule
2 & $\num{8.869e-02}(\num{4e-05})$ & $\num{5.919e+01}(\num{5e-02})$ & $\num{1.1746e+02}(\num{8e-02})$ & $\num{1.5929e-01}(\num{2e-05})$ & $\num{9e-02}(\num{4e-02})$ & $\num{9.1e+00}(\num{6e-01})$ & $\num{4.8e-03}(\num{3e-04})$ \\
5 & $\num{9.9527e-03}(\num{6e-07})$ & $\num{1.53842e+01}(\num{6e-04})$ & $\num{4.4459e+01}(\num{2e-03})$ & $\num{2.7842e-02}(\num{4e-06})$ & $\num{5e-03}(\num{2e-03})$ & $\num{7.0e-01}(\num{8e-02})$ & $\num{7.1e-03}(\num{1e-04})$ \\
10 & $\num{3.44241e-03}(\num{9e-08})$ & $\num{5.1267e+00}(\num{2e-04})$ & $\num{1.83573e+01}(\num{3e-04})$ & $\num{1.1609e-02}(\num{2e-06})$ & $\num{5.3e-04}(\num{1e-04})$ & $\num{1.2e-01}(\num{1e-02})$ & $\num{1.18e-02}(\num{2e-04})$ \\
20 & $\num{1.04521e-03}(\num{2e-08})$ & $\num{1.52850e+00}(\num{3e-05})$ & $\num{6.49980e+00}(\num{9e-05})$ & $\num{5.092e-03}(\num{2e-06})$ & $\num{2.6e-05}(\num{4e-06})$ & $\num{2.5e-02}(\num{3e-03})$ & $\num{2.5e-02}(\num{2e-03})$ \\
50 & $\num{1.72512e-04}(\num{1e-09})$ & $\num{2.52543e-01}(\num{6e-06})$ & $\num{1.31291e+00}(\num{2e-05})$ & $\num{1.4437e-03}(\num{3e-07})$ & $\num{2e-07}(\num{1e-07})$ & $\num{3e-03}(\num{1e-03})$ & $\num{5.72e-02}(\num{4e-04})$ \\
100 & $\num{4.06471e-05}(\num{4e-10})$ & $\num{6.10016e-02}(\num{5e-07})$ & $\num{3.49517e-01}(\num{7e-06})$ & $\num{4.5906e-04}(\num{8e-08})$ & $\num{1.1e-07}(\num{7e-08})$ & $\num{5e-04}(\num{1e-04})$ & $\num{1.019e-01}(\num{2e-04})$ \\
\bottomrule
\end{tabular}
}
\vspace{4cm}
\caption{Example 2. Error measures for different values of $N$. Figures correspond to the mean (standard deviation) of $5$ independent runs of the algorithm. Expectations computed over an independent Monte Carlo sample of size $M=2^{14}$.}\label{tab:ex2}
\resizebox{\textheight}{!}{\begin{tabular}{llllllll}
\toprule
 & max. $E||\delta \hat{X}_n^\pi||^2$ & max. $E||\delta \hat{P}_n^\pi||^2$ & avg. $E||\delta \hat{Q}_n^\pi||^2$ & avg. $E||\delta \hat{u}_n^\pi||^2$ & $E||-\nabla_x g(\hat{X}_N^\pi)-\hat{P}_N^\pi||^2$ & $|\delta \hat{Y}_0^\pi|$ & iter. time (s) \\
\midrule
2 & $\num{6.242e-02}(\num{5e-05})$ & $\num{6.065e+01}(\num{7e-02})$ & $\num{1.277e+02}(\num{1e-01})$ & $\num{1.1582e-01}(\num{8e-05})$ & $\num{3e-01}(\num{1e-01})$ & $\num{8.3e+00}(\num{3e-01})$ & $\num{4.7e-03}(\num{4e-04})$ \\
5 & $\num{1.04819e-02}(\num{2e-07})$ & $\num{1.5949e+01}(\num{3e-03})$ & $\num{4.666e+01}(\num{1e-02})$ & $\num{2.880e-02}(\num{3e-05})$ & $\num{1.2e-02}(\num{5e-03})$ & $\num{7.8e-01}(\num{6e-02})$ & $\num{7.1e-03}(\num{1e-04})$ \\
10 & $\num{3.7835e-03}(\num{1e-07})$ & $\num{5.4819e+00}(\num{5e-04})$ & $\num{1.94561e+01}(\num{1e-03})$ & $\num{1.3102e-02}(\num{2e-06})$ & $\num{2.5e-03}(\num{2e-04})$ & $\num{1.5e-01}(\num{2e-02})$ & $\num{1.16e-02}(\num{1e-04})$ \\
20 & $\num{1.18283e-03}(\num{7e-08})$ & $\num{1.6518e+00}(\num{1e-04})$ & $\num{6.8986e+00}(\num{4e-04})$ & $\num{5.9651e-03}(\num{7e-07})$ & $\num{3.8e-04}(\num{3e-05})$ & $\num{3.2e-02}(\num{6e-03})$ & $\num{2.5e-02}(\num{2e-03})$ \\
50 & $\num{2.2904e-04}(\num{2e-08})$ & $\num{2.9989e-01}(\num{2e-05})$ & $\num{1.39010e+00}(\num{9e-05})$ & $\num{1.6851e-03}(\num{2e-07})$ & $\num{4.9e-05}(\num{2e-06})$ & $\num{4.1e-03}(\num{7e-04})$ & $\num{5.75e-02}(\num{5e-04})$ \\
100 & $\num{6.9501e-05}(\num{4e-09})$ & $\num{8.4388e-02}(\num{9e-06})$ & $\num{3.7029e-01}(\num{5e-05})$ & $\num{5.5453e-04}(\num{9e-08})$ & $\num{1.7e-05}(\num{1e-06})$ & $\num{9e-04}(\num{2e-04})$ & $\num{1.023e-01}(\num{3e-04})$ \\
\bottomrule
\end{tabular}
}

\end{sidewaystable}

\subsection{Example 2 -- drift and diffusion control}\label{sec:example2}
In the next section, we provide an extension to the previous problem by introducing diffusion control to the LQ problem in Section \ref{sec:example1}. The coefficients read,
\begin{align}\label{eq:coefficients:ex2}
    C_j &= \frac{1}{60}\text{diag}([1, 2, 3, 1, 2, 3]),\quad 
    D_j &= \frac{1}{60}\begin{pmatrix}
        &1, &0, &1, &0, &1, &0\\
        &0, &-1, &0, &-1, &0, &-1
    \end{pmatrix}^T,
\end{align}
for all $j=1, \dots, m$, and the rest as in \eqref{eq:coefficients:ex1}. Note that due to the presence of diffusion control, this equation can not be treated via standard DP, such as \cite{andersson2023}, therefore no reference method is provided.

In Figure \ref{fig:ex2}, the relative approximation errors of the controlled state process and the optimal control strategy are depicted. Even with the introduction of diffusion control, we obtain accurate approximations up to $\mathcal{O}(10^{-2})$ relative accuracy in both processes. The tight quantile bounds suggest that our approximations remain accurate even in stronger senses than $L^2$.

As can easily be seen from \eqref{eq:lq:feedback:hamiltonian}, $Q$ couples back into the forward diffusion and hence the conditions of Assumption \ref{assumption:drift_control} are not satisfied. Therefore, this example does not fall under the setting of Theorem \ref{thm:estimate2}. Nevertheless, the convergence drawn in Figure \ref{fig:conv:ex2},  together with a related theoretical result Corollary 4.7 in \cite{reisinger2023}, gives hope that a similar theoretical bound using the weak and monotonicity conditions may also be established in the diffusion control case.  Figure \ref{fig:a_post:ex2} collects the convergence of the \emph{a-posteriori estimate}. Similar to the drift control case we find that the \emph{a-posteriori estimate} is dominated by the discretization component over the considered time steps $N$, and therefore \emph{a-posteriori estimate} converges with a rate of  $\mathcal{O}(h^{1.15})$.  These results are comparable to Example 1.

The relative $L^2$ approximation errors are depicted in Figure \ref{fig:in_time:ex2} for $N=100$. As we see, even in the case of diffusion control, we preserve a similar relative approximation error of $\mathcal{O}(10^{-3})$, as in Example 1. The slight increase in the relative error of $u$ is due to the feedback map \eqref{eq:lq:feedback:hamiltonian}, also accumulating the errors in $Q$. Finally, all error measures are collected in Table \ref{tab:ex2},  which shows that all processes converge as expected. Moreover,  the lack of difference in total runtime in Table \ref{tab:ex2}, compared to the drift control case in \ref{tab:ex1}, highlights the great potential of such deep BSDE formulations in the framework of high-dimensional diffusion control problems.

\section*{Appendix}

 In this appendix, we follow the same approach used for $m=1$ in \cite{yong1999stochastic}, and derive the reference solution for general dimensions $d$, $\ell$ and $m$.  By the DP, the value function $V(t, x)$ for a LQ problem should satisfy the HJB equation \eqref{eq:HJB} with a boundary condition $V(T, x) = \frac{1}{2} x^\top G x$, and, in the LQ case, the generalized Hamiltonian $\mathcal{G}$ is given by 
\begin{equation}
\begin{aligned}
\mathcal{G}\left(t, x, u ,-V_x, -V_{xx}\right)
= & -\frac{1}{2} u^\top R_u u  -   u^\top R_{xu} x - \frac{1}{2} x^\top R_x x - (Ax + B u + \beta)^\top V_x \\
& - \sum_{j=1}^m \frac{1}{2} (C_jx + D_ju + \Sigma_j)^\top V_{xx}(C_jx + D_ju + \Sigma_j).
\end{aligned}
\end{equation}
We conjecture that the value function takes the form $V(t, x)= \frac{1}{2} x^\top \Gamma(t) x + x^\top \gamma(t) + \kappa(t)$, for some unknowns symmetric $\Gamma(t)\in \mathbb{R}^{d\times d}$, $\gamma(t)\in\mathbb{R}^d$ and $\kappa(t)\in\mathbb{R}$, and for simplicity we omit the argument $t$ in the following. Then
\begin{align}
\mathcal{G}(t, x, u, -V_x, -V_{x x}) =
\begin{aligned}[t]  & -\frac{1}{2}  (u+\Psi x+\psi)^\top \widehat{R} (u+\Psi x+\psi) 
+ \frac{1}{2} x^\top  \widehat{S}^{\top} \widehat{R}^{-1} \widehat{S} x \\
&+ \frac{1}{2} x^\top  \left( - R_x - \sum_{j=1}^m C_j^{\top} \Gamma  C_j - \Gamma^{\top} A-A^{\top} \Gamma \right)  x  \\
&  - x^\top \left( A^{\top} \gamma + \sum_{j=1}^m C_j^{\top} \Gamma \Sigma_j -\Psi^{\top} \widehat{R} \psi + \Gamma \beta \right)\\
&+ \frac{1}{2}  \psi^\top \widehat{R} \psi  - \beta^\top \gamma - \sum_{j=1}^m \frac{1}{2} \Sigma_j^\top \Gamma \Sigma_j,
\end{aligned}
\end{align}
where we define the following
\begin{equation}
\begin{aligned}
& \widehat{R} \coloneqq R_u + \sum_{j=1}^m D_j^{\top} \Gamma D_j, \quad \widehat{S} \coloneqq B^{\top} \Gamma + R_{xu} + \sum_{j=1}^m D_j^{\top} \Gamma C_j, \\
& \Psi \coloneqq \widehat{R}^{-1} \widehat{S}, \quad \psi \coloneqq\widehat{R}^{-1}\left(B^{\top} \gamma + \sum_{j=1}^m D_j^{\top} \Gamma \Sigma_j \right),
\end{aligned}
\end{equation}
provided that $\widehat{R}$ is positive definite.
Due to the first quadratic term, we immediately see that the optimal control should take the following feedback form,
\begin{equation}\label{eq:ricatti_ode}
u^*
= - \Big(R_u + \sum_{j=1}^m D_j^{\top} V_{xx} D_j \Big)^{-1}  \begin{aligned}[t]
    \Big(&B^{\top} V_x + \big(R_{xu} + \sum_{j=1}^m D_j^{\top} V_{xx} C_j\big) x\\
    &+   \sum_{j=1}^m D_j^{\top} V_{xx} \Sigma_j\Big).
\end{aligned}
\end{equation}
Substituting $\mathcal{G}(t, x, u^*, -V_x, -V_{xx})$ back to \eqref{eq:HJB}, we find that $V(t, x)= \frac{1}{2} x^\top \Gamma(t) x + x^\top \gamma(t) + \kappa(t)$ solves the HJB equation whenever the following relations hold, at each $t\in [0, T]$
\begin{equation}\label{eq:Ric}
\left\{  
\begin{aligned}
& 0 = \dot{\Gamma} + \Gamma A+A^{\top} \Gamma + \sum_{j=1}^m C_j^{\top} \Gamma C_j + R_x - \widehat{S}^{\top} \widehat{R}^{-1} \widehat{S}, \\
& 0 = \dot{\gamma}+ A^{\top} \gamma + \sum_{j=1}^m C_j^{\top} \Gamma \Sigma_j -\Psi^{\top} \widehat{R} \psi + \Gamma \beta,\\
& 0 = \dot{\kappa} - \frac{1}{2} \psi^\top \widehat{R} \psi + \beta^\top\gamma + \sum_{j=1}^m \frac{1}{2} \Sigma_j^\top \Gamma \Sigma_j,\\
& \Gamma(T)=G,\quad \gamma(T)=0,\quad \kappa(T)=0. 
\end{aligned}
\right.
\end{equation}
This system of ODEs can be integrated numerically with practically arbitrary accuracy.
With the thereby obtained numerical solution of \eqref{eq:Ric}, one can subsequently compute the value function and all of its derivatives 
using the conjecture, and obtain the optimal control $u^*_t$ through the feedback map \eqref{eq:ricatti_ode}. The reference solution to the corresponding BSDE \eqref{eq:FBSDE}, derived via the SMP, can similarly be computed using the relations \eqref{PQ_representation_drift} in Remark \ref{PQ_representation}.

\section*{Acknowledgement}
The first author would like to thank the China Scholarship Council (CSC) for the financial support. The second author acknowledges financial support from the Peter Paul Peterich Foundation via the TU Delft University Fund. The authors would also like to thank the anonymous referees for their valuable comments and suggestions for improving the paper.

\bibliographystyle{unsrt} 
\bibliography{ref.bib} 




\end{document}